\newcommand{\begeqna}{\begin{eqnarray}}
\newcommand{\eeqna}{\end{eqnarray}}
\newcommand{\begeqnat}{\begin{eqnarray*}}
\newcommand{\eeqnat}{\end{eqnarray*}}
\newcommand{\be}{\begin{equation}}
\newcommand{\ee}{\end{equation}}
\newcommand{\bet}{\begin{equation*}}
\newcommand{\eet}{\end{equation*}}
\newcommand{\brmk}{\begin{remark} \em}
\newcommand{\ermk}{\end{remark} }
\theoremstyle{plain}
\newtheorem{thm}{Theorem}
\newtheorem{cor}{Corollary}
\newtheorem{lem}[cor]{Lemma}
\theoremstyle{definition}
\newtheorem{definition}[cor]{Definition}
\theoremstyle{remark}
\newtheorem{rmk}[cor]{Remark}
\numberwithin{cor}{section}
\numberwithin{equation}{section}
\newcommand{\EQ}[1]{\eqref{eq:#1}}
\newcommand{\LEM}[1]{Lemma~\ref{lem:#1}}
\newcommand{\THM}[1]{Theorem~\ref{thm:#1}}
\newcommand{\REM}[1]{Remark~\ref{rem:#1}}
\newcounter{hypo}
\DeclareMathOperator{\divg}{div}
\DeclareMathOperator{\sign}{sign}
\newcommand{\R}{\ensuremath{\mathbb{R}}}
\newcommand{\rn}{\R^n}
\newcommand{\Z}{\ensuremath{\mathbb{Z}}}
\newcommand{\N}{\ensuremath{\mathbb{N}}}
\newcommand{\am}{\alpha}
\newcommand{\Dg}{\Delta}
\newcommand{\Gg}{\Gamma}
\newcommand{\iy}{\infty}
\newcommand{\plap}{\Dg_{p}}
\newcommand{\diff}{\mathop{}\!\mathrm{d}}
\newcommand{\ess}{\mathop{}\!\mathrm{ess}}
\newcommand{\loc}{\mathop{}\!\mathrm{loc}}
\newcommand{\ds}{\displaystyle}
\newcommand{\mo}[1]{|{#1}|}
\newcommand{\abs}[1]{\mid #1 \mid}
\def\XXint#1#2#3{{
\setbox0=\hbox{$#1{#2#3}{\int}$}
\vcenter{\hbox{$#2#3$}}\kern-.5\wd0}}
\newcommand{\funcionesc}[5]{\begin{array}{rccl}
#1:&#2&\longrightarrow &#3\\
&#4& \xmapsto{\phantom{\rightarrow }}&#5.\end{array}}
\newcommand{\pvi}[3]{\be\label{#1}
     \begin{cases}#2 & = #3\;\; \mbox{in}\;\;\; (0,\iy)\\
\;\;\;\;\;\;\;\;\;\;\;\;\; \;\; v(0)&=v_{0}>0 \\
\;\;\;\;\;\;\;\;\;\;\;\;\; \;\; v'(0)&=0
\end{cases}
     \ee}
\newcommand{\vk}{v_{k}}
\newcommand{\vkl}{v'_{k}}
\newcommand{\vl}{v'}
\newcommand{\vll}{v''}
\newcommand{\vlp}{(v')^{p-1}}
\newcommand{\vlpl}{((v')^{p-1})'}
\newcommand{\fik}{\phi_{k}}
\newcommand{\mdo}{m_{2}}
\newcommand{\mtre}{m_{3}}
\newcommand{\exist}[4]{H^{-1}\left(\int_{0}^{#1}\left(\frac{t}{#2}\right)^{n-1} F(#3,#4)\diff t\right)}
\begin{document}
\title[Positive solutions of quasi-linear equations with gradient terms]{Existence and nonexistence of positive solutions of quasi-linear elliptic equations with gradient terms}

%%%%%%%%%%%%%%%%%%%%%%%%%%%%%%%%%%%%%%%%%%%%%%%%%%%%%%%%%%%%%%%%%%%%%%%%%%%%%%%%%%%%%%
\author{Dania Gonz\'{a}lez Morales}
\address{Department of Mathematics\\ Pontifical Catholic University \\ Rio de Janeiro Rio de Janeiro, Brazil.}
\email{dania@mat.puc-rio.br}
%%%%%%%%%%%%%%%%%%%%%%%%%%%%%%%%%%%%%%%%%%%%%%%%%%%%%%%%%%%%%%%%%%%%%%%%%%%%%%%%%%%%%%
%% Abstract

\begin{abstract}
We study the existence and nonexistence of positive solutions in the whole Euclidean space of coercive quasi-linear  elliptic equations such as
\bet
\plap u = f(u)\pm g(\left|\nabla u\right|)
\eet
where $f\in C([0,\infty))$ and $g\in C^{0,1}([0,\infty)) $ are strictly increasing with $ f(0)=g(0)=0$.

Among other things we obtain generalized  integral conditions of Keller-Osserman type. In the particular case of  plus sign on the right-hand side we obtain that different conditions are needed when $p\geq 2$ or $p\leq 2$, due to  the degeneracy of the operator.

\end{abstract}

%\date{\today}

%%%%%%%%%%%%%%%%%%%%%%%%%%%%%%%%%%%%%%%%%%%%%%%%%%%%%%%%%%%%%%%%%%%%%%%%%%%%%%%%%%%%%%

\maketitle

%%%%%%%%%%%%%%%%%%%%%%%%%%%%%%%%%%%%%%%%%%%%%%%%%%%%%%%%%%%%%%%%%%%%%%%%%%%%%%%%%%%%%%
\medskip
\medskip
%\noindent \textbf{Keywords:}

%\tableofcontents

%%%%%%%%%%%%%%%%%%%%%%%%%%%%%%%%%%%%%%%%%%%%%%%%%%%%%%%%%%%%%%%%%%%%%%%%%%%%%%%%%%%%%%%
%% Sections

\section{Introduction}

In this paper we study the existence and nonexistence of nonnegative solutions of the coercive quasi-linear elliptic equation with gradient term
\be\label{eq:Pmasmenos}
\plap u = f(u)\pm g(\mo{\nabla u}) \;\; \mbox{in} \; \rn,\tag{$P_{\pm}$}
\ee
where $\plap$ denotes the $p-$Laplacian operator
\be
\label{eq:eqlap}
\plap\cdot:=\divg (\mo{\nabla \cdot}^{p-2}\nabla \cdot), \;\;\; 1<p<\iy,%$\footnote{The cases of the operators $\Delta_{1}\cdot:=\divg (\frac{\nabla \cdot}{\left|\nabla \cdot\right|})$ ($p=1$) and $\Delta_{\iy}u\cdot:=\langle D\cdot, D^{2}\cdot D\cdot \rangle$ ($p=\iy$) are beyond the objectives of this paper.}
\ee
 and
\be\label{eq:fgcond}
f\in C([0,\infty)) , g\in C^{0,1}([0,\infty)) \ \mbox{are strictly increasing with} \; f(0)=g(0)=0.
\ee

The $p$-Laplacian operator appears in several contexts, such as  glaciology, radiation of heat, nonlocal diffusion, image and data processing  (for more applications, please consult \cite{elmoataz2015p}, \cite{bueno2012positive}, \cite{mastorakis2009solution} and the references therein).

The main novelty of this work resides in the concurrent resolution of difficulties which arise, on one hand, from the singular or degenerate nature of the operator, and on the other hand, from the general form of the right hand side and the need to understand how the interaction between $f$ and $g$ affects the solvability of the problem. The presence of the gradient term combined with the intrinsic properties of the $p-$Laplacian operator makes our study of this type of problems  different from previous works, to our knowledge.

In the particular case $p=2$ this type of study goes back to  the famous work \cite{lasry1989nonlinear} of Lasry and Lions, who studied explosive solutions defined in a bounded domain. Concerning  unbounded domains,  novel results appeared later  in the paper of Farina and Serrin \cite{farina2011entire}. They studied positive solutions in the whole Euclidean space when the term which depends on the gradient is described by an specific power growth, and the sum is replaced by a product of $f$ and $g$. In a more recent work, Felmer, Quaas and Sirakov \cite{BB} showed results of existence and non existence of \eqref{eq:Pmasmenos}, when the operator is uniformly elliptic, such as the Laplacian, for viscosity solutions in the whole Euclidean space.  They gave a rather precise description of the way that the interaction between the terms $f$ and $g$ in the nonlinearity influences the solvability of these problems.

As far as the general case when $p$ is not necessarily two is concerned, we first quote the pioneering work
\cite{mitidieri1999nonexistence} of Mitidieri and Pohozaev. They studied nonexistence results for entire weak, in the Sobolev sense, solutions of equations without dependence on the gradient. Mitidieri and Pohozaev used a priori estimates obtained with a careful choice of test functions. Important existence and nonexistences results for equations like $(P\pm)$ when the gradient term has a particular (power) form  are due to Filipucci, Pucci and Rigoli \cite{ filippucci2008non, filippucci2009entire, filippucci2009weak}. They used refined techniques based on comparison principles and the solutions they considered  were weak in the sense of entire $C^{1}$-distributional solutions. Besides, the work by Farina and Serrin \cite{farina2011entire} contains important contributions for equations containing the $p$-Laplacian, with nonlinearities which behave like products (as opposed to sums and differences) of $u$ and its gradient, possibly dependent on $x$ with conditions required only for large radii.

However, to our knowledge,  when the problems involve a term which depends on the gradient, in all references, it has power growth or the non linearity behaves exactly as a product of a term in $u$ and a term which depends on the gradient.
Motivated  by these results we attempt to get a generalization of the results in \cite{BB} to the case of the $p$-Laplacian. We proceed by obtaining some generalized Keller-Osserman (KO) integral conditions and deal  with comparison principles too.

 We recall that the Keller-Osserman integral conditions where developed independently by Keller in \cite{keller} and Ossermann in  \cite{ossi}, to obtain necessary and sufficient conditions for the existence of positive solutions of
 \bet
 \Delta u =f(u) \,\,\,\, \mbox{in}\,\, \rn,
 \eet
 with $f$ as in \EQ{fgcond}.

We need some different techniques from those used for the Laplacian, because of the intrinsic properties of the $p-$Laplacian operator. Since it is strictly elliptic in $\rn\setminus \left\{0\right\}$ only when $1<p\leq 2$ and degenerate when $p > 2$ we had to deal with this degeneracy. In addition, for the case of $(P_{+})$, we manage to prove the nonexistence of a positive solution in the weak Sobolev  sense, given in the following definition.
\begin{definition}[Solution of $(P+)$]
\label{def:defsol}
 We say that $u\in \ds{W_{\loc}^{1,p} (\rn)}$ is a weak subsolution of $(P\pm)$ in $\rn$ if
\begin{equation}
\int_{\rn} \left|\nabla u\right|^{p-2}\nabla u\nabla \phi \diff x \leq \int_{\rn}-(f(u)\pm g(\left|\nabla u\right|))\phi \diff x
\end{equation}
for each $\phi \in C_{c}^{\infty}(\rn)$, $\phi\ge0$.
Analogously we can define a weak supersolution by presuming the opposite inequality. Finally we say that $u$ is a weak solution, in the Sobolev sense,  if it is subsolution and  supersolution at the same time.
\end{definition}
\begin{rmk}\label{rem:rem1}
Specifically, our nonexistence results are valid for weak solutions which belong to the class $\ds{W^{1,\infty}_{\loc}(\rn)}$. However, as we explain later in Remark \ref{rem:weaklp}, we can still obtain a nonexistence result for subsolutions in $\ds{W_{\loc}^{1,p}(\rn)}$, if a comparison principle were proven for solutions in $\ds{W_{\loc}^{1,p}(\rn)}$ of  $(P+)$.
\end{rmk}

The following theorem contains our existence and non-existence statements for the problem $(P+)$. The non-existence result says that if a condition on $f$, resp. $g$, which guarantees lack of solutions for the problem with $g=0$, resp. $f=0$, is valid, then there are no solutions for the full problem.
In the existence result we establish how the value of $p$ influences the solvability of the problem. For $p$ larger or smaller than $2$ we obtain an explicit relation between the functions $f$ and $g$, which permits to us  to conclude about the existence.

\begin{thm}[Existence theorem for $(P+)$]\label{thm:first}
Let $f$,\;$g$ be functions satisfying \EQ{fgcond} and $\displaystyle{F(s)=\int_{0}^{s}f(t)\diff t}$.
\begin{itemize}
\item[(i)] If either
\be\label{eq:condintext}
\int^\infty_1{\frac{1}{(F(s))^{\frac{1}{p}}}}\diff s<+\infty\;\;\;\;\; \mbox{or}\;\;\;\;
\int_{1}^{\infty}\frac{s^{p-2}\diff s}{g(s)}<+\infty
\ee
 then any non negative weak subsolution in the space $\ds{W^{1,\infty}_{\loc}}(\rn)$ of
 \be\label{eq:Pmas}
\plap u = f(u)+g(\left|\nabla u\right|)\tag{$P_{+}$}\;\; \mbox{in} \;\; \rn
\ee
is identically zero.
\item [(ii)] If

\noindent\begin{minipage}{0.4\textwidth}
\be
\int^\infty_1{\frac{1}{(F(s))^{\frac{1}{p}}}}\diff s=+\infty \label{eq:kof}
\ee
    \end{minipage}%
    \begin{minipage}{0.1\textwidth}\centering
    and
    \end{minipage}%
\noindent\begin{minipage}{0.4\textwidth}
\be
\int_{1}^{\infty}\frac{s^{p-2}\diff s}{g(s)}=+\infty \label{eq:kog}
\ee
    \end{minipage}\vspace{0.5cm}

and there exist numbers $A_0,\epsilon_0>0$ such that for all $A\geq A_0$, either
\be\label{eq:ass1}
\mbox{if\;} \;\; p\leq 2,\;\;\;\;\liminf_{s\rightarrow \infty} \frac{g(A F(s)^{\frac{1}{p}})}{A^{p}f(s)}>\frac{1}{p}+\epsilon_{0}
\ee
or
\be\label{eq:ass2}
\mbox{if\;} \;\; p\geq 2,\;\;\;\;\limsup_{s\rightarrow \infty}\displaystyle   \frac{g(A F(s)^{\frac{1}{p}})}{A^{p}f(s)}<\displaystyle\frac{1}{p}-\epsilon_{0}
\ee
then $(P_{+})$  has at least one positive solution.
\end{itemize}
\end{thm}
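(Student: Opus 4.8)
The plan is to reduce both statements to the radial ODE satisfied by $u(x)=v(\mo{x})$, namely
\[
(p-1)(v')^{p-2}v'' + \frac{n-1}{r}(v')^{p-1} = f(v) + g(v'),\qquad v(0)=v_0>0,\ v'(0)=0,
\]
equivalently $\big(r^{n-1}(v')^{p-1}\big)' = r^{n-1}\big(f(v)+g(v')\big)$. The integral form shows at once that $v'>0$ for $r>0$, so any such $v$ is increasing and convex, stays above $v_0>0$, and can fail to be entire only through a finite-radius blow-up of $v'$ (equivalently of $v$).

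For the nonexistence part (i) I would compare with boundary-blow-up (large) solutions. When the Keller--Osserman integral for $f$ converges, the pure equation $\plap V=f(V)$ admits a radial large solution on every ball $B_R$ with $V\to\infty$ on $\partial B_R$; since $g\ge0$ this $V$ is a supersolution of the full problem, so a comparison principle yields $u\le V$ on $B_R$ for every nonnegative subsolution $u$. Letting $R\to\infty$, these large solutions decrease to an entire solution of $\plap V=f(V)$, which must vanish identically because convergence of the $f$-integral rules out any positive entire solution; hence $u\equiv0$. When instead $\int_1^\infty s^{p-2}/g(s)\,\diff s$ converges, the same scheme with large solutions of $\plap V=g(\mo{\nabla V})$ only gives that $u$ is bounded, since constants solve the pure-gradient equation; I would then close the argument with a Liouville-type step: a bounded nonnegative $u$ satisfying $\plap u\ge f(u)\ge0$ is $p$-subharmonic and bounded, hence constant, and a positive constant is impossible because it would force $f\equiv0$.

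For the existence part (ii) the heart of the matter is global solvability of the radial ODE under the two divergent KO conditions and the interaction hypothesis. After local existence and the qualitative facts above---treating $r=0$ with the usual care, where $(v')^{p-2}$ is singular or degenerate---I would rule out finite-radius blow-up through the energy $E=\tfrac{p-1}{p}(v')^p-F(v)$, for which the equation gives
\[
E'=g(v')v'-\frac{n-1}{r}(v')^{p}\le g(v')v'.
\]
Two competing blow-up mechanisms are in play: an $f$-driven one, controlled by $\int^\infty \diff s/F(s)^{1/p}$ and along which $v'\sim F(v)^{1/p}$, and a $g$-driven one, controlled by $\int^\infty s^{p-2}\,\diff s/g(s)$. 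Each integral diverges by hypothesis, so neither mechanism blows up on its own; the interaction condition is precisely what confines the trajectory to one clean regime. Inserting the trial bound $v'\le A\,F(v)^{1/p}$ into $E'$ and demanding self-consistency, the admissible size of the ratio $g(A\,F^{1/p})/(A^{p}f)$ is dictated by the sign of $p-2$ in the term $(p-1)(v')^{p-2}v''$: for $p\ge2$, where the operator is degenerate and resists gradient blow-up, one keeps $g$ subordinate to $f$---this is hypothesis \EQ{ass2}---so that $v'\lesssim F^{1/p}$ and the $f$-KO integral closes the bound; for $p\le2$, where the operator is singular and prone to gradient blow-up, one instead keeps $g$ dominant---hypothesis \EQ{ass1}---so that the bound on $v'$ is closed by $\int^\infty s^{p-2}\,\diff s/g(s)=\infty$. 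In either case $v'$ stays finite on every bounded interval, $v$ is entire, and $u(x)=v(\mo{x})$ is the desired positive solution.

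The main obstacle is twofold. First, the comparison principle invoked throughout part (i) must be established for the quasilinear operator with a genuinely gradient-dependent right-hand side; the Lipschitz regularity of $g$ should make this available, but the degeneracy of $\plap$ for $p>2$ and its singularity for $p<2$ demand care, and this is exactly where the restriction to $W^{1,\infty}_{\loc}$ subsolutions enters (cf.\ Remark~\ref{rem:rem1}). Second, and more delicate, is propagating the a priori bound in part (ii) from $r=0$ to all $r$: one must check that the bootstrap inequality extracted from $E'$ is preserved, using $A\ge A_0$ and the slack $\epsilon_0>0$ to absorb both the lower-order term $\tfrac{n-1}{r}(v')^{p}$ and the discrepancy between the trial profile and the genuine solution---and it is precisely the change in the direction of this inequality across $p=2$ that forces the two separate hypotheses \EQ{ass1} and \EQ{ass2}.
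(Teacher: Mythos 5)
Your proposal has genuine gaps in both parts. In part (i), the branch $\int_1^\infty s^{p-2}g(s)^{-1}\,\diff s<\infty$ does not go through: the pure-gradient equation $\plap V=g(\mo{\nabla V})$ admits no boundary blow-up barrier (as you note, constants solve it), and the Liouville step you substitute --- ``a bounded nonnegative $p$-subharmonic function is constant'' --- is false for $p<n$ (e.g.\ $1-\min\{\mo{x}^{(p-n)/(p-1)},1\}$ is bounded, nonnegative, nonconstant and $p$-subharmonic). The mechanism the paper actually uses, and which is absent from your sketch, is that under \emph{either} integral condition in \EQ{condintext} the radial solution $v$ of the \emph{full} problem exists only up to a finite $R$ with $v'(r)\to\infty$ as $r\to R$, while $v$ itself may stay bounded (\LEM{lemageneral}); if $v$ blows up, comparison on $B_{R-\epsilon}$ already contradicts $v(0)<w(0)$, and if not, one translates $v$ upward until its graph touches the subsolution $w$ from above at a point of $\partial B_R(0)$, and the infinite interior normal derivative of $v$ at $r=R$ contradicts $w\in W^{1,\infty}_{\loc}(\rn)$. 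This is exactly where the $W^{1,\infty}_{\loc}$ hypothesis enters, and it also shows your parenthetical ``blow-up of $v'$ (equivalently of $v$)'' is wrong: \LEM{lemageneral}(ii) characterizes blow-up of $v$ by $\int^\infty s^{p-1}g(s)^{-1}\,\diff s=\infty$, which is strictly stronger than the failure of \EQ{kog}.

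In part (ii) you have correctly matched \EQ{ass1} with $p\le2$ and \EQ{ass2} with $p\ge2$, but the argument is a statement of intent rather than a proof: the energy inequality $E'\le g(v')v'$ does not close on its own, and ``inserting the trial bound $v'\le A\,F(v)^{1/p}$ and demanding self-consistency'' is precisely the step that must be established. The paper does this through a trichotomy on $A(r)=r^{n-1}v'/F(v)^{1/p}$ near the putative blow-up radius: if $A$ is bounded, \EQ{kof} gives an immediate contradiction; if $A\to\infty$, one compares $w=(v')^{p-1}$ with the solution of the auxiliary ODE $S'=(p-1)g(S^{1/(p-1)})$, whose global solvability is exactly \EQ{kog}, and concludes $v'<F(v)^{1/p}$ near $R$, contradicting $A\to\infty$; and only in the remaining oscillatory case ($\limsup A=\infty$, $\liminf A<\infty$) are \EQ{ass1}--\EQ{ass2} used, by evaluating the sign of $A'(v')^{p-2}$ along sequences where $A$ equals a fixed large value with prescribed monotonicity and exploiting that the extra term $(p-2)r^{n-1}(v')^{p-2}v''F^{-1/p}$ has a definite sign according to $p\lessgtr2$. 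None of these three steps is carried out in your proposal, so the conclusion that $v'$ stays finite on bounded intervals is unsupported; the oscillatory case in particular cannot be reached by a monotone bootstrap of the kind you describe.
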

The integral conditions  represent generalized (KO) integral conditions. Observe that in the problem $(P+)$ growth of $f$ and $g$ leads to   growth in the $p$-Laplacian, hence, at least morally, to growth of $u$. In this way,
\EQ{kof} and \EQ{kog} describe how  quickly $f(s)$ and $g(s)$ can grow as $s$ tend to infinity, so that the positive solution does not blow up at some finite point.

To state a few examples,  the generalized (KO) integral condition associated to $f$, \EQ{kof}, is satisfied by standard functions whose growth at infinity does not exceed that of $t^{q} \;,q\leq p-1$ or $t^{p-1} (\log t)^q\;,0<q\leq p$. The (KO) integral condition \EQ{kog} associated to $g$ is valid, for example, by functions whose growth at infinity does not exceed that of $t^{q} \;,q\leq p-1$ or $t^{p-1} (\log t)^q\;,0<q\leq 1$.

The hypotheses  \EQ{ass1} and \EQ{ass2} set a comparison between $g\circ F^{\frac{1}{p}}(s)$ and $f(s)$ for large values of $s$ which depends on the values of $p$. We note that for most standard functions the limits in \EQ{ass1} and \EQ{ass2} are zero or infinity, so these hypotheses are easy to verify.
We also note that if $g\circ F^{\frac{1}{p}}$ grows no faster than $f$ at infinity we cannot conclude anything about the existence of positive solutions if $p<2$ even if \EQ{kof} and \EQ{kog} are satisfied. Analogously if $g\circ F^{\frac{1}{p}}$ grows strictly faster than $f$, and $p>2$.
\begin{rmk}\label{rem:altcond}
Assumptions \EQ{ass1} and \EQ{ass2} can be avoided in the particular case where $g(s)$ has at most $s^{p-1}$ growth as $s$ tends to infinity. This will be explained later.
\end{rmk}
The treatment for $(P-)$ is somewhat different and in some way simpler, because we don't need the additional conditions \EQ{ass1} and \EQ{ass2} to conclude about the existence. In addition, \EQ{kog} is unnecessary to obtain existence.

 We resume our results on $(P-)$ in the next theorem.
\begin{thm}[Existence theorem for $(P-)$]\label{thm:second}
Let $f$ and $g$ be functions that satisfy \EQ{fgcond}.
\begin{itemize}
\item [(i)] If
\begin{equation}\label{eq:pmecond1}
\int^{\infty}_{1}\frac{1}{\Gamma^{-1}(F(s))}\diff s < \infty
\end{equation}
with $\Gamma$ defined by
\be\label{eq:eqgama}
\Gamma(s)=\int^{2s}_{0} g(t) \diff t + \frac{p-1}{p} c s^{p}
\ee
then any subsolution of $(P_{-})$ vanishes identically.
\item [(ii)] If
\be\label{eq:pminsecond}
\int^{\infty}_{1}\frac{1}{(F(s))^{\frac{1}{p}}}\diff s = \infty \;\;\; \mbox{or} \;\;\; \int^{\infty}_{1}\frac{1}{g^{-1}(f(s))}\diff s = \infty
\ee
then $(P_{-})$ admits at least one positive solution.
\end{itemize}
\end{thm}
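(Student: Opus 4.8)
The plan is to reduce both statements to the radial ordinary differential equation associated with $(P_-)$ and to exploit a first--integral (energy) identity. Writing $u(x)=v(\mo{x})$ with $v'\ge 0$, the equation $(P_-)$ becomes
\be
\big((v')^{p-1}\big)'+\frac{n-1}{r}(v')^{p-1}=f(v)-g(v'),\qquad v'(0)=0,
\ee
and multiplying by $v'$ and integrating on $[0,r]$ yields
\bet
\frac{p-1}{p}(v')^{p}+(n-1)\int_{0}^{r}\frac{(v')^{p}}{\rho}\diff\rho+\int_{0}^{r}g(v')v'\diff\rho=F(v)-F(v(0)).
\eet
All three terms on the left are nonnegative, so this single identity drives both directions: discarding terms gives the upper bound on $v'$ needed for global existence, while retaining them and comparing with $\Gamma$ gives the lower bound on $v'$ needed to force finite--radius blow up.

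For part (ii) I would fix $a>0$ and produce a global increasing radial solution with $v(0)=a$. Local solvability near $r=0$ follows from the integral form $(v')^{p-1}(r)=r^{1-n}\int_0^r s^{n-1}(f(v)-g(v'))\diff s$, which also shows $v'>0$ for small $r>0$, so $v\ge a>0$ as long as the solution exists. To rule out blow up I would use whichever hypothesis in \EQ{pminsecond} holds. If $\int^\infty (F(s))^{-1/p}\diff s=\infty$, discarding the last two nonnegative terms in the identity gives $v'\le(\tfrac{p}{p-1})^{1/p}F(v)^{1/p}$, whence $r\ge(\tfrac{p-1}{p})^{1/p}\int_a^{v(r)}(F(s))^{-1/p}\diff s\to\infty$. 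If instead $\int^\infty (g^{-1}(f(s)))^{-1}\diff s=\infty$, I would argue by a barrier: at any point where $v'=g^{-1}(f(v))$ the equation gives $((v')^{p-1})'=-\tfrac{n-1}{r}(v')^{p-1}\le 0$, so $v'$ cannot cross this increasing upper barrier, giving $v'\le g^{-1}(f(v))$ and again $r\ge\int_a^{v(r)}(g^{-1}(f(s)))^{-1}\diff s\to\infty$. Either way $v$ is a global positive radial solution and $u(x)=v(\mo{x})\in C^{1}$ is the desired weak solution of $(P_-)$ in $\rn$.

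For part (i) I would argue by comparison. For every $\tau>0$ let $\phi_\tau$ be the radial solution with $\phi_\tau(0)=\tau$, $\phi_\tau'(0)=0$; being an exact radial solution it is a supersolution of $(P_-)$ on every ball. The key point is that $\phi_\tau$ blows up at a finite radius: from the identity I would deduce $\Gamma(v')\ge F(v)-F(\tau)$, i.e. $v'\ge\Gamma^{-1}(F(v)-F(\tau))$, so that its blow--up radius satisfies
\bet
R(\tau)\le\int_{\tau}^{\infty}\frac{\diff s}{\Gamma^{-1}(F(s)-F(\tau))}<\infty
\eet
by \EQ{pmecond1} (the tail at $s=\infty$ is insensitive to the constant $F(\tau)$, and the $\tfrac{p-1}{p}cs^{p}$ term in \EQ{eqgama} forces $\Gamma^{-1}$ to behave like a $p$-th root near $0$, guaranteeing integrability at the lower endpoint). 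Since $\phi_\tau\to\infty$ on $\partial B_{R(\tau)}(x_0)$, any nonnegative subsolution $u$ satisfies $u\le\phi_\tau$ there, and the comparison principle for $(P_-)$ proved earlier gives $u\le\phi_\tau$ in $B_{R(\tau)}(x_0)$, hence $u(x_0)\le\phi_\tau(0)=\tau$. Letting $\tau\to0^{+}$ forces $u(x_0)=0$, and since $x_0$ is arbitrary, $u\equiv0$.

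The main obstacle is the energy estimate underlying $\Gamma$ in part (i): one must dominate the dissipation term $\int_0^r g(v')v'\diff\rho$ and the dimensional term $(n-1)\int_0^r(v')^p/\rho\,\diff\rho$ by $\int_0^{2v'}g(t)\diff t+\tfrac{p-1}{p}(c-1)(v')^{p}$, which is precisely what fixes the doubling in $\int_0^{2s}g$ and the constant $c$ in \EQ{eqgama}. Bounding the dissipation term seems to require monotonicity of $v'$ in the blow--up regime together with a change of variables $v'\diff\rho=\diff v$, while the dimensional term must be absorbed away from $r=0$; making these bounds uniform in $\tau$ is the delicate part. A secondary difficulty is that the comparison principle must be applied to the gradient--dependent, degenerate or singular operator in $(P_-)$ with supersolutions that are infinite on the boundary, so I would run the comparison on $B_{R(\tau)-\eps}(x_0)$ and pass to the limit $\eps\to0$.
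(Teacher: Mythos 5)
Your part (ii) is essentially the paper's argument: the bound $v'\le(\tfrac{p}{p-1})^{1/p}F(v)^{1/p}$ obtained by discarding nonnegative terms, and the bound $v'\le g^{-1}(f(v))$ (which the paper gets even more directly from Lemma~\ref{lem:prad}: since $((v')^{p-1})'\ge0$ and $\tfrac{n-1}{r}(v')^{p-1}\ge0$, the right-hand side $f(v)-g(v')$ must be nonnegative), followed by integration of $\diff s/(\cdot)$ to push the maximal radius to infinity. That half is sound.

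Part (i), however, rests on a claim you do not prove and which, as you have set it up, does not go through: that the \emph{exact} radial solution $\phi_\tau$ with an arbitrary (in particular arbitrarily small) initial datum $\tau$ satisfies $\Gamma(\phi_\tau')\ge F(\phi_\tau)-F(\tau)$ and hence blows up at a finite radius. From your energy identity this would require
\[
\frac{p-1}{p}(v')^{p}+(n-1)\int_{0}^{r}\frac{(v')^{p}}{\rho}\diff\rho+\int_{0}^{r}g(v')v'\diff\rho\;\le\;\int_{0}^{2v'}g(t)\diff t+\frac{p-1}{p}\,c\,(v')^{p}.
\]
For the dissipation term the natural estimate is $\int_0^r g(v')v'\diff\rho\le g(v'(r))\,(v(r)-v_0)\le r\,v'(r)\,g(v'(r))$ (using $v''\ge0$), and this is dominated by $\int_0^{2v'}g\ge v'g(v')$ only when $r\le1$; for the dimensional term no bound of the form $C(v')^{p}$ with $C$ independent of $r$ is in sight at all. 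So the inequality is available only on a bounded range of $r$, which is circular: you need it precisely to prove that the range is bounded. This is not a presentational quibble; the paper's own construction indicates the difficulty is real, since its blow-up object is not the radial solution but an explicit supersolution $\bar v(r)=\phi(R^{p/(p-1)}-r^{p/(p-1)})$ (Lemma~\ref{lem:pmaedot}) which exists only for $\bar v_0$ \emph{large}: the ansatz requires $\phi\le\tfrac12|\phi'|$, which holds only near the blow-up, i.e.\ only where the solution is already large. Consequently your scheme of comparing $u$ with $\phi_\tau$ and letting $\tau\to0^{+}$ to conclude $u(x_0)\le\tau\to0$ is unavailable. The paper instead inserts an intermediate step you are missing: it first shows, by comparison with a \emph{global} radial solution started at $w(0)/2$, that any nontrivial solution $w$ of $(P_{-})$ must be unbounded along a sequence $|x_n|\to\infty$, and only then centers the finite-radius, large-data supersolution at a point where $w$ exceeds the threshold $\bar v_0$, obtaining the contradiction there. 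You need either that two-step structure or an actual proof of finite-radius blow-up for small radial data; as written, part (i) has a genuine gap.
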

Observe that, by the definition of $\Gg$, if \EQ{pmecond1} is satisfied then none of the assumptions in \EQ{pminsecond} is possible. Analogously if at least one of the integral condition in \EQ{pminsecond} is valid, then \EQ{pmecond1} is not possible. On the other hand nothing can be said about \EQ{pmecond1} if \EQ{pminsecond} is not satisfied.

In the end, it is worth noting that it is only technical to extend  all the above results to non-autonomous equations, when positive continuous weight functions (depending on $|x|$) multiply $f$ and $g$. This naturally leads to  modifications in the generalized (KO) integral conditions. We also observe that, by applying for instance Young's inequality, the case of a right-hand side which behaves as a product of functions of $u$ and its gradient can be reduced to the problem $(P+)$.
\medskip

\noindent{\it Acknowledgement.} I thank my Ph.D. adviser Prof. B. Sirakov for suggesting the problem and many helpful remarks, which improved the presentation of the paper.

\section{Preliminaries}
\subsection{Some features of the $p-$Laplacian}
The $p-$Laplacian operator reduces to the  Laplacian when $p=2$.
However, the $p-$Laplacian has different structural properties from  the Laplacian as $p\not=2$, as we quickly recall.

Set
$$\funcionesc{A}{\rn}{\rn}{\xi}{ A(\xi):=\left|\xi\right|^{p-2}\xi}$$
To analyze the ellipticity of \EQ{eqlap} we compute the Jacobian matrix associated to $A$:
\begin{equation*}
\partial_{\xi}A(\xi)=\left|\xi\right|^{p-2}\left(I_{n}+\frac{p-2}{\left|\xi\right|^{2}}\xi\otimes\xi\right).
\end{equation*}
The  eigenvalues of this matrix are $\left|\xi\right|^{p-2}$ and $(p-1)\left|\xi\right|^{p-2}$ with $\xi\neq 0$ if $p<2$.
When $1<p<2$ the $p-$Laplacian  is a {\it singular} operator, while for $p>2$ it is a {\it degenerate} operator. Observe that the zeros of the gradient are the singularities of $\Delta_p u$. We can say nothing about the ellipticity at these points.

The singular/degenerate character of the $p-$Laplacian is  the main cause of the difficulties in establishing comparison principles. We will use the following result from \cite{pp}.
\begin{thm}[Comparison principle]
\label{compprinc2}
Assume that $B=B(x,z,\xi)$ is locally Lipschitz continuous with respect to $\xi$ in $\Omega\times\R\times\rn$ and is non-increasing in the variable $z$.  Let $u$ and $v$ be solutions of class $\ds{W^{1,\infty}_{\loc}(\Omega)}$ of
\begin{equation*}
\plap u+ B(x,u,Du)\geq 0 \;\mbox{in} \;\Omega\;\; ,\;\;\; \plap v+ B(x,v,Dv)\leq 0 \;\mbox{in}\; \Omega,
\end{equation*}
where $p>1$. Suppose that
\bet
\ess\inf_{\Omega}\left\{\left|Du\right|+\left|Dv\right|\right\}>0.
\eet
 If $u\leq v+M$ in $\partial\Omega$ where $M\geq 0$ is constant then $u\leq v+M$ in $\Omega$.
\end{thm}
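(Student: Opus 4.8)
The plan is to prove the statement by contradiction, through the weak comparison (test-function) method, using the nondegeneracy hypothesis to neutralise the singular/degenerate character of the operator. Suppose the conclusion fails, so that the open set $D=\{x\in\Omega:u(x)>v(x)+M\}$ has positive measure, and put $w=u-v-M$. Since $u\le v+M$ on $\partial\Omega$ the function $w^{+}$ vanishes on $\partial\Omega$; after the usual truncation $w_{k}^{+}=\min(w^{+},k)$ (needed to keep everything integrable when $\Omega$ is unbounded, and removed afterwards by letting $k\to\infty$ on an exhaustion of $\Omega$) it is an admissible nonnegative test function. Testing the two weak differential inequalities against $w^{+}$ and subtracting gives
\begin{equation*}
\int_{D}\bigl(|\nabla u|^{p-2}\nabla u-|\nabla v|^{p-2}\nabla v\bigr)\cdot\nabla w\,\diff x\le\int_{D}\bigl(B(x,u,\nabla u)-B(x,v,\nabla v)\bigr)\,w\,\diff x,
\end{equation*}
because $\nabla w^{+}=\nabla u-\nabla v$ a.e.\ on $D$ and $\nabla w^{+}=0$ elsewhere.

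I would then dispose of the lower-order term by splitting it as $B(x,u,\nabla u)-B(x,v,\nabla v)=\bigl[B(x,u,\nabla u)-B(x,u,\nabla v)\bigr]+\bigl[B(x,u,\nabla v)-B(x,v,\nabla v)\bigr]$. The first bracket is controlled by the local Lipschitz dependence of $B$ on $\xi$, namely $|B(x,u,\nabla u)-B(x,u,\nabla v)|\le L\,|\nabla u-\nabla v|$, where $L$ is the Lipschitz constant on the (compact) range of the gradients. The second bracket is $\le 0$ because $u>v$ on $D$ and $B$ is non-increasing in $z$; this favourable sign is exactly the structural ($d\le0$) condition that will let the final maximum principle hold with no restriction on the size of $D$. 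Consequently the right-hand side is dominated by $L\int_{D}|\nabla u-\nabla v|\,w\,\diff x$.

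The heart of the argument, and the step I expect to be the main obstacle, is the coercive lower bound for the principal part. Here I would use the strong-monotonicity inequality for the field $\xi\mapsto|\xi|^{p-2}\xi$ in its weighted form, valid for every $p>1$,
\begin{equation*}
\bigl(|a|^{p-2}a-|b|^{p-2}b\bigr)\cdot(a-b)\ge c_{p}\,(|a|+|b|)^{p-2}\,|a-b|^{2}.
\end{equation*}
This is precisely where the hypothesis $\essinf_{\Omega}\{|\nabla u|+|\nabla v|\}>0$ is indispensable: together with $u,v\in W^{1,\infty}_{\loc}(\Omega)$ it confines the pair of gradients, on each compact subset, to an annulus $0<m_{0}\le|\nabla u|+|\nabla v|\le C$, on which $(|\nabla u|+|\nabla v|)^{p-2}$ is bounded below by a positive constant (using $m_{0}$ when $p\ge2$ and $C$ when $1<p<2$). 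The principal part is thereby uniformly monotone and Lipschitz, i.e.\ comparable to a genuinely uniformly elliptic linear operator in divergence form acting on $w$, and one gets $\int_{D}(\,\cdots\,)\cdot\nabla w\ge c_{*}\int_{D}|\nabla w|^{2}$. Without this nondegeneracy the bound would collapse wherever the gradients vanish and the comparison can genuinely fail, which is why the assumption is imposed.

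Putting the pieces together, $w$ is a weak subsolution, on $D$, of a uniformly elliptic linear divergence-form inequality whose only lower-order term is the bounded drift coming from the Lipschitz estimate, with no adverse zeroth-order term (thanks to the monotonicity in $z$). I would close by the weak maximum principle for such operators, which holds on arbitrary bounded subdomains without any smallness condition; since $w\le 0$ on $\partial D$ this forces $w\le0$ in $D$, contradicting $w>0$ there. The unbounded case is recovered by running the same argument on an exhaustion $\Omega\cap B_{R}$ and letting $R\to\infty$. Hence $D$ is null and $u\le v+M$ in $\Omega$, as claimed.
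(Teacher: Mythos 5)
First, a point of reference: the paper does not actually prove this theorem --- it is imported verbatim from Corollary 3.6.3 of \cite{pp}, and the text's ``proof'' is just that citation. So there is no in-paper argument to compare yours against; what you have written is a reconstruction of the standard weak-comparison proof, and its main ingredients (testing with $(u-v-M)^{+}$, the weighted monotonicity inequality $(|a|^{p-2}a-|b|^{p-2}b)\cdot(a-b)\ge c_{p}(|a|+|b|)^{p-2}|a-b|^{2}$ valid for every $p>1$, the favourable sign of the $z$-increment, and the identification of $\essinf_{\Omega}(|Du|+|Dv|)>0$ as the hypothesis that restores uniform ellipticity) are all correct and are indeed the mechanism behind the quoted result.

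There is, however, a genuine soft spot at exactly the step you single out as the heart of the matter. Testing only with $\varphi=w^{+}$ yields the single inequality $c_{*}\int_{D}|\nabla w^{+}|^{2}\le L\int_{D}|\nabla w^{+}|\,w^{+}$, hence $\|\nabla w^{+}\|_{L^{2}(D)}\le (L/c_{*})\|w^{+}\|_{L^{2}(D)}$; by Poincar\'e this forces $w^{+}\equiv0$ only when $D$ is small, i.e.\ precisely under the smallness restriction you claim to avoid. To conclude ``on arbitrary bounded subdomains'' you must upgrade $w$ to a genuine weak subsolution of a \emph{linear} uniformly elliptic equation, valid against all nonnegative test functions: write $|\nabla u|^{p-2}\nabla u-|\nabla v|^{p-2}\nabla v=\bar A(x)\nabla w$ with $\bar A(x)=\int_{0}^{1}\partial_{\xi}A(\nabla v+t\nabla w)\,\diff t$, and $B(x,u,\nabla u)-B(x,v,\nabla v)=b(x)\cdot\nabla w+c(x)(u-v)$ with $|b|\le L$ and $c\le0$ on $D$, and then invoke the weak maximum principle for such operators (e.g.\ Theorem 8.1 of \cite{truddi}). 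This requires checking that $\bar A$ is bounded and uniformly elliptic, which does not follow immediately from the endpoint condition $|\nabla u|+|\nabla v|\ge m_{0}$, because the segment joining $\nabla v$ to $\nabla u$ may pass through the origin, where $\partial_{\xi}A$ degenerates (for $p>2$) or blows up (for $1<p<2$); a short computation using $m_{0}$ together with the local $L^{\infty}$ gradient bound does give uniform two-sided bounds on $\int_{0}^{1}|\nabla v+t\nabla w|^{p-2}\diff t$, but this is exactly where the nondegeneracy hypothesis is consumed and it should be carried out rather than asserted. A minor further caveat: your constants $C$, $L$, $c_{*}$ are only locally uniform, so the argument must be run on an exhaustion by sets compactly contained in $\Omega$ rather than on $\Omega\cap B_{R}$, since a $W^{1,\infty}_{\loc}$ function may have unbounded gradient near $\partial\Omega$ even when $\Omega$ is bounded.
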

A complete proof can be find in \cite{pp}, corollary 3.6.3.
\begin{rmk}
The condition $\ess\inf_{\Omega}\left\{\left|Du\right|+\left|Dv\right|\right\}>0$ is vital for the comparison principle to be satisfied. In fact, if a comparison principle is satisfied we obtain uniqueness results. On the other hand, as observed in \cite{pp}, the problem
\bet
\begin{cases}
\Delta_{4}u+\abs{Du}^{2}&=0\;\;\;\mbox{in}\;\; B_{R}\subset \mathbb{R}^{2} \\
\;\;\;\;\;\;\;\;\;\;\;\;\;\;\;\;\;\;u&=0 \;\;\;\mbox{on}\;\; \partial B_{R}
\end{cases}
\eet
admits two solutions $u(x)=0 $ and $v(x)=\frac{1}{8}(R^{2}-|x|^{2})$ in $B_{R}$. It is obvious that $\left|Du\right|+\left|Dv\right|=0$ at zero, and consequently $\ess\inf_{\Omega}\left\{\left|Du\right|+\left|Dv\right|\right\}=0$ for these solutions.

We will be able to apply the comparison principle thanks to  the hypotheses for $f$ and~$g$, which we made in \EQ{fgcond}.
\end{rmk}

\subsection{Associated ODE problem}
Since we are going to use generalized (KO) conditions we need to deal with the radial version of our problem. Thus, in this section we focus our attention on the associated ODE.

Take $u(x)=v(\left|x\right|)=v(r)$ in \EQ{Pmasmenos} and add the initial conditions $u(0)=v(0)=v_{0}>0$ and $v'(0)=0$, which are consistent with our goals. Thus, the problem
\pvi{eq:pmmaedo}{\left(r^{n-1}\vlp\right)'}{r^{n-1}\left(f(v)\pm g(v')\right)}
corresponds to the radial version of \EQ{Pmasmenos} with fixed initial data.

More precisely, if we compute the $p-$Laplacian of a radial function $u(x)=v(r)$ we obtain the one-dimensional operator
\begin{equation}
\label{eq:uso1}
\sign (v'(r))\left(r^{n-1}\left|v'(r)\right|^{p-1}\right)',
\end{equation}
and of course  $g(|\nabla u|)=g(\left|v'\right|)$.

In \EQ{pmmaedo}  the equation is written in a simplified form because, as we will see next,  $v$ is positive and $v'$ is such that $\vl(0)=0$ and $\vl(r)>0$ for all $r>0$.

It is essentially known that for a ODE problem like \EQ{pmmaedo}  the non-negativity of the solutions and its derivatives can be  deduced a priori. We give a full proof of this fact, for the reader's convenience.
\begin{lem}[A priori properties of the radial solutions]
\label{lem:prad}
Let $v=v(r)$ be a solution of
\begin{equation}\label{eq:edoorg}
\begin{cases}
\sign(v')\left(r^{n-1}\left|v'\right|^{p-1}\right)'&=r^{n-1}\left(f(v)\pm g(\left| v'\right|)\right) \\
\;\;\;\;\;\;\;\;\;\;\;\;\; \;\;\;\;\;\; \;\;\;\;\;\; \;\;\;\; v(0)&=v_{0}>0\\
\;\;\;\;\;\;\;\;\;\;\;\;\; \;\;\;\;\;\; \;\;\;\;\;\; \;\;\; v'(0)&=0
\end{cases}
\end{equation}
in some interval $\left[0, R\right]$ with $0<R<\infty$. Then $v> 0$, $v'> 0$, $v''\geq 0$ and $v(r)\leq v_{0}+ R v'(r)$, for all $r\in(0,R)$.
\end{lem}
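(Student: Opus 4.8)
The plan is to work throughout with the radial ODE \EQ{edoorg}, isolating the singular point $r=0$ from the rest of $[0,R]$. I would first record the regularity that legitimizes the pointwise computations: the flux $P(r):=r^{n-1}|v'|^{p-2}v'$ is $C^{1}$ with $P(0)=0$ and $P'(r)=r^{n-1}(f(v)\pm g(|v'|))$, and on any subinterval where $v'\neq0$ one has $(v')^{p-1}=P(r)/r^{n-1}>0$, so that $v'$ is $C^{1}$ (a smooth positive power of a $C^{1}$ function) and $v\in C^{2}$ with $v''$ continuous there; the only genuine singularity is at $r=0$. For the positivity step, since $v(0)=v_{0}>0$ and $v'(0)=0$ we have $f(v)\to f(v_{0})>0$ and $g(|v'|)\to0$ as $r\to0^{+}$, so $P'>0$ and hence $P>0$, i.e. $v'>0$, for small $r>0$. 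Setting $r_{\ast}=\sup\{\rho: v'>0 \text{ on } (0,\rho)\}$ and assuming $r_{\ast}<R$, on $(0,r_{\ast})$ the function $v$ is increasing, so $v\ge v_{0}>0$, while by maximality $v'(r_{\ast})=0$; then $P(r_{\ast})=0$ although $P>0$ on $(0,r_{\ast})$, forcing $P'(r_{\ast})\le0$. Since $g(0)=0$, however, $P'(r_{\ast})=(r_{\ast})^{n-1}f(v(r_{\ast}))>0$, a contradiction. Thus $v'>0$ on $(0,R)$ and $v\ge v_{0}>0$; note this argument is insensitive to the sign $\pm$.

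For convexity I would write the equation as $(p-1)(v')^{p-2}v''=m(r)$ with $m(r):=f(v)\pm g(v')-\frac{n-1}{r}(v')^{p-1}$, so that $v''$ has the sign of the continuous function $m$; using $(v')^{p-1}=r^{1-n}\int_{0}^{r}s^{n-1}(f(v)\pm g(v'))\diff s$ one checks $m(r)\to\frac{1}{n}f(v_{0})>0$ as $r\to0^{+}$. In the $+$ case I would argue at the first zero: if $r_{1}$ is the least $r$ with $m(r_{1})=0$, then $v''>0$ on $(0,r_{1})$, so $v$ and $v'$ increase and $h:=f(v)+g(v')$ is non-decreasing on $[0,r_{1}]$; the weighted-average bound $\int_{0}^{r_{1}}s^{n-1}h\,\diff s\le\frac{r_{1}^{n}}{n}h(r_{1})$ gives $\frac{n-1}{r_{1}}(v'(r_{1}))^{p-1}\le\frac{n-1}{n}h(r_{1})<h(r_{1})$, i.e. $m(r_{1})>0$, contradicting $m(r_{1})=0$. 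Hence $m>0$ and $v''>0$ on $(0,R)$ in this case.

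The \emph{main obstacle} is the $-$ case, where this averaging fails because $h=f(v)-g(v')$ need not be monotone, and where the mere continuity of $f$ forbids differentiating the equation for a pointwise argument. Here I would instead use a ``last turning point'': if $v''(\bar r)<0$ for some $\bar r$, let $r_{1}<\bar r$ be the largest zero of the continuous function $v''$ below $\bar r$, so that $m(r_{1})=0$ and $v''<0$ on $(r_{1},\bar r]$. On that interval $v'$ is strictly decreasing, so $v'(r)<v'(r_{1})$, and since $v$ is strictly increasing the three comparisons $f(v(r))>f(v(r_{1}))$, $-g(v'(r))>-g(v'(r_{1}))$ and $-\frac{n-1}{r}(v'(r))^{p-1}>-\frac{n-1}{r_{1}}(v'(r_{1}))^{p-1}$ hold simultaneously, whence $m(r)>m(r_{1})=0$ and therefore $v''(r)>0$ on $(r_{1},\bar r]$ --- contradicting $v''<0$. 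This monotone bookkeeping, together with justifying the $C^{2}$ regularity away from $0$, is the delicate part; everything else is bootstrapping.

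Finally, once $v''\ge0$ is established $v'$ is non-decreasing, so for $r\in(0,R)$ we get $v(r)=v_{0}+\int_{0}^{r}v'(s)\,\diff s\le v_{0}+r\,v'(r)\le v_{0}+R\,v'(r)$, which is the asserted inequality and completes the proof.
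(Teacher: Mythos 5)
Your proof is correct, and three of its four components coincide with the paper's argument: the positivity of $v'$ via the first zero of the flux $r^{n-1}(v')^{p-1}$ (where the ODE forces the derivative of the flux to be positive while monotonicity forces it to be nonpositive), the treatment of the minus sign via the last sign change of $v''$ before a hypothetical point where $v''<0$ (the paper phrases this as the infimum of the interval of negativity, which is the same device), and the final integration giving $v(r)\le v_0+Rv'(r)$. Your added regularity remarks, justifying that $v''$ is continuous away from $r=0$ once $v'>0$ is known, are welcome and are left implicit in the paper. The one genuine divergence is the plus-sign case. The paper supposes $((v')^{p-1})'$ has a first zero $r_1$, uses the monotonicity of $f(v)+g(v')$ to compare the equation at $r_1-h$ and at $r_1$, divides by $h$ and passes to the limit to conclude $(v')^{p-1}(r_1)<0$, contradicting $v'>0$. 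You instead integrate the equation up to $r_1$ and use the monotone-weight bound $\int_0^{r_1}s^{n-1}h\,\diff s\le \frac{r_1^{n}}{n}h(r_1)$ to obtain $m(r_1)\ge\frac{1}{n}h(r_1)>0$ directly. Your version is cleaner and sidesteps a small delicacy in the paper's limit passage (a strict inequality is carried through a limit where a priori it only survives as $\ge$, though the resulting $(v')^{p-1}(r_1)\le 0$ still contradicts $v'>0$); it is in fact the same averaging estimate the paper itself deploys later in the proof of \LEM{lemageneral}(i). Both routes establish the lemma.
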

\begin{proof}[Proof of Lemma \ref{lem:prad}]
By an simple computation it is easy to see that the problem \EQ{edoorg} can be rewritten as
\begin{equation}\label{eq:edoorgmod}
\begin{cases}
\left(\left|v'\right|^{p-2}v'\right)'+\frac{n-1}{r}\left|v'\right|^{p-2}v'&=f(v)\pm g(\left|v'\right|) \\
\;\;\;\;\;\;\;\;\;\;\;\;\; \;\;\;\;\;\;\;\;\;\;\;\;\;\;\;\;\;\;\;\;\;\;\;\; v(0)&=v_{0}>0\\
\;\;\;\;\;\;\;\;\;\;\;\;\; \;\;\;\;\;\;\;\;\;\;\;\;\;\;\;\;\;\;\;\;\;\;\ v'(0)&=0.
\end{cases}
\end{equation}
First we deal with the signs of $v'$ and $v$.
By letting $r\rightarrow 0$ we obtain that
\begin{equation*}
\lim_{r\rightarrow 0}\left(\left|v'\right|^{p-2}v'\right)'(r)=\frac{1}{n} f(v_{0})>0,
\end{equation*}
which implies that $\left(\left|v'\right|^{p-2}v'\right)'(r)>0$ for all $r>0$ close enough to zero. Then obviously $\left|v'\right|^{p-2}v'(r)>0$ for all $r>0$ close enough to zero as well. Consequently $v'(r)>0 $ for all $r>0$ close  to zero. Now we need to study if this behavior is the same for all $r>0$.

Suppose that there exists $r_{1}>0$ such that $\left|v'\right|^{p-2}v'(r)>0$ in $(0,r_{1})$ and $\left|v'\right|^{p-2}v'(r_{1})=0$. Then we would have
$\left(\left|v'\right|^{p-2}v'\right)'(r_{1})\leq 0$. On the other hand, by using the equation in \EQ{edoorgmod} we obtain that $\left(\left|v'\right|^{p-2}v'\right)'(r_{1})> 0$, a contradiction. Hence $v'(r)>0$ for all $r>0$. Since $v(0)=v_{0}>0$ we also conclude that $v(r)>0$ for all $r>0$.

Knowing the signs of $v$ and $v'$ we rewrite our equation as
\begin{equation*}
\left((v')^{p-1}\right)'+\frac{n-1}{r}(v')^{p-1}=f(v)\pm g(v').
\end{equation*}
In order to deal with the sign of $v''$ we separate the argument in two cases, according to the sign $+$ or $-$ in the non-linearity.
\begin{itemize}
    \item [Case 1.] Positive sign in the non-linearity.
\end{itemize}
\pvi{eq:edomp}{\vlpl+\frac{n-1}{r}\vlp}{f(v)+ g(v')}

Suppose that there exist  $\epsilon,\; r_{1}>0$ such that $((v')^{p-1})'(r)>0$ in $(r_{1}-\epsilon,r_{1})$ and $((v')^{p-1})'(r_{1})=0$. Taking $h>0$ sufficiently small  we have
\begin{equation*}
 ((v')^{p-1})'(r_{1}-h)+(n-1)\frac{(v')^{p-1}(r_{1}-h)}{r_{1}-h}\leq((v')^{p-1})'(r_{1})+(n-1)\frac{(v')^{p-1}(r_{1})}{r_{1}},
\end{equation*}
since $f(v(r))$ and $g(v'(r))$ are strictly increasing  for all $r>0$. Then,
\begin{eqnarray*}
(n-1)\left(\frac{(v')^{p-1}(r_{1})}{r_{1}}-\frac{(v')^{p-1}(r_{1}-h)}{r_{1}-h}\right)&\geq& ((v')^{p-1})'(r_{1}-h)-((v')^{p-1})'(r_{1})\\
&=&((v')^{p-1})'(r_{1}-h)>0.
\end{eqnarray*}
Now dividing by $h$ and letting it tend to zero
\begin{eqnarray*}
0&<& \lim_{h\rightarrow 0}(n-1)\frac{1}{h}\left[\frac{(v')^{p-1}(r_{1})}{r_{1}}-\frac{(v')^{p-1}(r_{1}-h)}{r_{1}-h}\right]\\
&=&(n-1)\left[\frac{(v')^{p-1}}{r}\right]'\bigg|_{r=r_{1}}\\
&=&(n-1)\left[\frac{((v')^{p-1})'(r_{1})r_{1}-(v')^{p-1}(r_{1})}{r_{1}^{2}}\right]\\
&=&-(n-1)\frac{(v')^{p-1}(r_{1})}{r_{1}^{2}}.
\end{eqnarray*}
 Thus  $(v')^{p-1}(r_{1})< 0$, and since $r_{1}$ is arbitrary we obtain a contradiction with the fact that $v'(r)>0$ for all $r>0$.

\begin{itemize}
    \item [Case 2.] Negative sign in the non-linearity.
\end{itemize}
\begin{equation}\label{eq:edomm}
\begin{cases}
\left((v')^{p-1}\right)'+\frac{n-1}{r}(v')^{p-1}&=f(v)- g(v') \;\;\; \mbox{in}\;\;(0,\iy) \\ \;\;\;\;\;\;\;\;\;\;\;\;\; \;\;\;\;\;\;\;\;\;\;\;\;\;\;\;\;\;\; v(0)&=v_{0}>0\\
\;\;\;\;\;\;\;\;\;\;\;\;\; \;\;\;\;\;\;\;\;\;\;\;\;\;\;\;\;\; v'(0)&=0.
\end{cases}
\end{equation}
Suppose that $((v')^{p-1})'(r_{1})<0$ for some $r_{1}>0$ . Let
\begin{equation*}
r_{2}=\inf \left\{\tilde{r}\;:\;((v')^{p-1})'(r)<0\; \mbox{in}\; (\tilde{r},r_{1})\right\}.
\end{equation*}
 Since $\lim_{r\rightarrow 0}((v')^{p-1})'(r)> 0$ we have that $r_{2}>0$ and $((v')^{p-1})'(r_{2})= 0$. Moreover $((v')^{p-1})'(r)< 0$ for $r>r_{2}$  sufficiently close to $r_{2}$. This implies that $(v')^{p-1}(r)$ is decreasing for $r>r_{2}$ sufficiently close to $r_{2}$. On the other hand, since $v$ is increasing
\begin{equation*}
((v')^{p-1})'(r)=f(v(r))-\frac{n-1}{r}(v')^{p-1}-g(v')
\end{equation*}
is increasing. Thus $((v')^{p-1})'(r_{2})=0$ implies that $((v')^{p-1})'(r)>0$ if $r>r_{2}$ close to $r_{2}$, a contradiction, and we are done.

Also, since  $v''(r)>0$ for all $r>0$ we have that $v'(r)$  is non decreasing for  $0<r<R$, then
\begin{equation*}
v(r)=v_{0}+\int_{0}^{r} v'(s) \diff s\leq v_{0}+R v'(r).
\end{equation*}
\end{proof}

\section{Local existence for the radial problem}

  We will now study the existence of solution of \eqref{eq:pmmaedo} in a right neighborhood of zero.  Because of the presence of the gradient in the right hand side we will need to apply topological tools, in particular,  the Leray-Schauder theorem. This dependence in the gradient makes it necessary to bound not only the function but also its derivative. Consequently, in this way we can only show the existence of solution  in a neighborhood of zero. Furthermore, even this is not immediate when $p\neq 2$ because of  the expression involving the exponent $p-1$ inside the derivative on the left hand side.

In the next lemma we prove the existence of solution in a neighborhood of zero.  Related results have appeared for instance in \cite{chen2001singular} and \cite{bueno2012positive}.

\begin{lem}[Existence of solution in a neighborhood of zero]
\label{lem:solmax}
Let $f$ and $g$ be continuous increasing functions. Then there exists $0< r_{1}\leq 1$ such that  the problem \EQ{pmmaedo} has a positive  solution $v(r)\in C^{2}[0, r_{1}]$.
\end{lem}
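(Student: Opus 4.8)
The plan is to recast the ODE \EQ{pmmaedo} as a fixed point equation in $C^1[0,r_1]$ and solve it by a topological argument, as announced. Integrating the equation once from $0$ to $r$ and using $v'(0)=0$ gives
\[
r^{n-1}(v'(r))^{p-1}=\int_0^r s^{n-1}\bigl(f(v(s))\pm g(v'(s))\bigr)\diff s,
\]
so that, dividing by $r^{n-1}$ and integrating again,
\[
v(r)=v_0+\int_0^r\left(\frac{1}{t^{n-1}}\int_0^t s^{n-1}\bigl(f(v(s))\pm g(v'(s))\bigr)\diff s\right)^{\frac{1}{p-1}}\diff t .
\]
A solution of \EQ{pmmaedo} is exactly a fixed point of the operator $T$ given by this right-hand side. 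The crucial structural feature is that $Tv$ depends on $v'$ as well as on $v$, which is precisely why the fixed point has to be sought in $C^1$ and why one must control both the function and its derivative.

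Next I would set up the self-mapping. For constants $\delta,M>0$ let $K=\{v\in C^1[0,r_1]:\ \norm{v-v_0}_\infty\le\delta,\ \norm{v'}_\infty\le M\}$, a closed convex subset of $C^1[0,r_1]$. Fix $\delta,M$ small enough that $f(v_0-\delta)-g(M)>0$; then on $K$ the integrand $f(v)\pm g(v')$ is strictly positive (automatic for the $+$ sign), so the $\frac{1}{p-1}$-th power is real and well defined, and it is bounded, $|f(v)\pm g(v')|\le C$, with $C=C(\delta,M)$ independent of $r_1$. Writing $H(t)=t^{-(n-1)}\int_0^t s^{n-1}(f(v)\pm g(v'))\diff s$, the bound $0\le H(t)\le Ct/n$ gives $\norm{(Tv)'}_\infty=\norm{H^{1/(p-1)}}_\infty\le (Cr_1/n)^{1/(p-1)}$ and $\norm{Tv-v_0}_\infty\le r_1\norm{(Tv)'}_\infty$. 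Choosing $r_1\le1$ small enough (with $\delta,M,C$ already frozen) makes these at most $M$ and $\delta$ respectively, so $T(K)\subset K$.

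Then I would verify compactness and continuity and invoke the fixed point theorem. The family $\{(Tv)':v\in K\}=\{H^{1/(p-1)}\}$ is uniformly bounded and equicontinuous: the averaged quantity $H$ extends continuously to $t=0$ with $H(0)=0$ and is equi-Lipschitz on $[0,r_1]$, while $\tau\mapsto\tau^{1/(p-1)}$ is uniformly continuous on bounded sets; hence $\{(Tv)'\}$ is precompact in $C[0,r_1]$ by Arzel\`a--Ascoli. Since $(Tv)(r)=v_0+\int_0^r(Tv)'$, the images $Tv$ are then precompact in $C^1$, so $T$ is compact. Continuity of $T$ follows because $v_k\to v$ in $C^1$ forces $f(v_k)\pm g(v_k')\to f(v)\pm g(v')$ uniformly, hence $H_k\to H$ and $(Tv_k)'\to (Tv)'$ uniformly. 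Schauder's fixed point theorem (equivalently the Leray--Schauder theorem, the required a priori bound being supplied by the self-mapping estimate) yields a fixed point $v\in K$, which solves \EQ{pmmaedo}.

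The step I expect to be the main obstacle is the compactness estimate, because of the interplay near $r=0$ of the singular weight $t^{-(n-1)}$ and the $\frac{1}{p-1}$-th root: one must check that $H$ extends continuously to the origin and control the modulus of continuity of $H^{1/(p-1)}$ there uniformly over $K$, which is exactly the difficulty created by the exponent $p-1$ when $p\neq2$. Finally, for regularity, once a fixed point is found, $h(s):=f(v)\pm g(v')$ is continuous, so $r^{n-1}(v')^{p-1}=\int_0^r s^{n-1}h$ is $C^1$; since the integrand is positive, $v'>0$ on $(0,r_1]$, the map $\tau\mapsto\tau^{p-1}$ is a local diffeomorphism there and one solves for $v''$, obtaining $v\in C^2(0,r_1]$. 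Continuity of the relevant quantities up to the origin, together with the computation $\lim_{r\to0}\bigl((v')^{p-1}\bigr)'(r)=\tfrac1n f(v_0)$ already used in \LEM{prad}, gives a classical solution with the stated regularity, and its sign and monotonicity properties are then those recorded in \LEM{prad}.
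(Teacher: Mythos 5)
Your proposal is correct and follows essentially the same route as the paper: the same integral operator $T$, a self-map of a closed convex bounded subset of $C^{1}$, compactness via Arzel\`a--Ascoli, and the Leray--Schauder (Schauder) fixed point theorem, with the same regularity bootstrap at the end. The only notable difference is that you obtain equicontinuity of $(Tv)'$ in one stroke, as the composition of the equi-Lipschitz averaged quantity $H$ with the uniformly continuous map $\tau\mapsto\tau^{1/(p-1)}$, whereas the paper splits into the cases $1<p\le 2$ (explicit computation of $(Tv_k)''$) and $p>2$ (H\"older-composed-with-Lipschitz); your unified argument is a mild streamlining, and your explicit positivity requirement on the integrand in the $(P_-)$ case is a point of care the paper leaves implicit.
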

This lemma will be obtained   by observing that
finding a solution for the problem is equivalent to finding a fixed point of some well chosen integral operator. Then using the Arzel\'{a}-Ascoli theorem and the dominated convergence theorem we prove the continuity and compactness of this operator. Last we apply the Leray-Schauder fixed point theorem  (corollary $11.2$ in \cite{truddi}), to this operator in a well chosen closed, convex  and bounded set.

\begin{proof}
For ease of notation set
\begin{equation*}
    H(t):=t^{p-1}, \;\; t>0
\end{equation*}
and
\begin{equation*}
    F(v,v'):= f(v)\pm g(v').
\end{equation*}
With this notation we rewrite the radial version of our problem as
\pvi{eq:probexit}{\left(r^{n-1} H(\vl)\right)'}{F(v,v')}
We consider the Banach space $X=C^{1}([0,r_{1}])$ with the associated norm
\begin{equation*}
\left\|u\right\|_{C^{1}([0,r_{1}])}=\left\|u\right\|_{L^{\infty}([0,r_{1}])}+\left\|u'\right\|_{L^{\infty}([0,r_{1}])}
\end{equation*}
and $0<r_{1}<1$ that will be chosen below. Let the integral operator
\begin{equation*}
\begin{split}
T:\ C^{1}([0,r_{1}]) &\rightarrow C^{1}([0,r_{1}])
\end{split}
\end{equation*}
be defined as
\be
\label{eq:operator}
T v(r)= v_{0}+\int^{r}_{0}\exist{s}{s}{v(t)}{\vl(t)}\diff s.
\ee
Observe that solving \EQ{probexit} is equivalent to finding a fixed point of this integral operator. From now on we focus on showing the existence of such a fixed point.

 Let $\left\{v_{k}\right\}_{k\in \N}$ be a bounded sequence in $C^{1}([0,r_{1}])$,
\begin{equation*}
\left\|v_{k}\right\|_{C^{1}([0,r_{1}])}\leq M, \;\;\; \forall k\in \Z  .
\end{equation*}
By the monotonicity properties of $f$ and $g$, for all  $r\in [0,r_{1}]$
\begin{equation*}
f(v_{k}(r))\leq f(M):= k_1, \qquad
g(v'_{k}(r))\leq g(M):= k_{2},
\end{equation*}
and  $F(v,v')\leq k_{1}+k_{2}$.

Then, using that $H^{-1}$  is strictly increasing and the fact that $0\leq r <r_{1}<1$ we have
\begin{eqnarray*}
T v_{k}(r)&=& v_{0}+\int^{r}_{0}H^{-1}\left(\int^{s}_{0}\left(\frac{t}{s}\right)^{n-1} F(v_{k}(t),v_{k}'(t))\diff t\right)\diff s\\
&\leq & v_{0}+\int^{r_{1}}_{0}H^{-1}\left((k_{1}+k_{2})\int^{s}_{0}\left(\frac{t}{s}\right)^{n-1} \diff t\right)\diff s\\
&\leq & v_{0}+r_{1}H^{-1}\left((k_{1}+k_{2})\int^{r_{1}}_{0}\diff t\right)\\
&\leq & v_{0}+H^{-1}\left(k_{1}+k_{2}\right).
\end{eqnarray*}
Consequently $Tv_{k}(r)$ is uniformly bounded in the sup norm.

We also have
\begin{eqnarray}
\label{eq:tderv}
(Tv_{k})'(r)&=& H^{-1}\left(\int^{r}_{0}\left(\frac{t}{r}\right)^{n-1} F(v_{k}(t),v_{k}'(t))\diff t\right)\nonumber\\
&\leq & H^{-1}\left((k_{1}+k_{2})\int^{r}_{0}\left(\frac{t}{r}\right)^{n-1} \diff t\right)\nonumber\\
&\leq & H^{-1}\left((k_{1}+k_{2})\int^{r}_{0}\diff t\right)\nonumber\\
&\leq & H^{-1}\left((k_{1}+k_{2})\;r\right)\\
&\leq & H^{-1}\left(k_{1}+k_{2}\right),\nonumber
\end{eqnarray}
showing that $(Tv_{k})'(r)$ is uniformly bounded in the sup norm. Then $Tv_{k}(r)$ is equicontinuous.\\
Since we deal with the Banach space $\left(C^{1},\left\|\cdot\right\|_{C^{1}}\right)$ we need to prove the equicontinuity also for $(Tv_{k})'(r)$.
Thus, we obtain ($\tilde{F}:= F(\vk,\vkl)$)
\bet
(Tv_{k})''(r)=\frac{1}{p-1}\left( \int_{0}^{r}\left(\frac{t}{r} \right)^{n-1} \tilde{F} \diff t\right)^{\frac{2-p}{p-1}}
\left(\tilde{F}+\frac{1-n}{r}\int_{0}^{r}\left(\frac{t}{r}\right)^{n-1}\tilde{F} \diff t \right).
\eet
Letting $\phi_{k}=(Tv_{k})'$
\begin{equation}\label{eq:segdopert}
    \left|(Tv_{k})''(r)\right|=\frac{1}{p-1}\left[\left|\phi_{k}(r)\right|^{2-p} F(\vk(r),\vkl(r))+\frac{n-1}{r}\left|\phi_{k}(r)\right|\right].
\end{equation}
To study the  uniform boundedness of this derivative we need to pay attention to each of the two terms in this sum.  Observe that in the first term appears the exponent $2-p$, which  leads to separation in two cases, according to the degeneracy character of the operator. As for the second term, note that although $\fik$ is uniformly bounded we can not immediately conclude because of the term $1/r$.

Besides, from \EQ{tderv} we know that
\bet
\fik(r)=(Tv_{k})'(r)\leq H^{-1}(k_{1}+k_{2}) r^{\frac{1}{p-1}}
\eet
which implies that
\be
\label{eq:bdfi}
\frac{(n-1)\fik(r)}{r}\leq (n-1) H^{-1}(k_{1}+k_{2}) r^{\frac{2-p}{p-1}}.
\ee
Thus we also need to consider the values of $p$ for the second term of the sum.
\begin{itemize}
    \item[Case I.]  $1<p\leq 2$.
\end{itemize}
If $1<p\leq 2$, we have that $2-p\geq 0$.  Since $\phi_{k}$ is uniformly bounded as we saw earlier the first term on the right hand side of \EQ{segdopert} is also uniformly bounded. On the other hand by \EQ{bdfi} the second term is uniformly bounded too.  Then $(Tv_{k})'$ is Lipschitz continuous uniformly with respect to $k$, and consequently equicontinuous.
\begin{itemize}
    \item[Case II.] $p>2$.
\end{itemize}
If $p> 2$ we have that $2-p< 0$. Then the Lipschitz continuity is lost and we can not conclude about the equicontinuity of $(Tv_{k})'$ as above. We will prove that $(Tv_{k})'$ is $\alpha-$H\"{o}lder continuous with $\displaystyle{\alpha=\frac{1}{p-1}<1}$, to conclude about the equicontinuity

Let $\lambda_{k}(r)=\displaystyle{\int^{r}_{0}\left(\frac{t}{r}\right)^{n-1} F(v_{k}(t),v_{k}'(t))\diff t}$, then
\begin{equation*}
    (Tv_{k})'(r)=\left(H^{-1}\circ\lambda_{k}\right)(r),
\end{equation*}
where,
$H^{-1}(t) = t^{\frac{1}{p-1}}$, $t>0$, is $\alpha-$ H\"{o}lder continuous with $\displaystyle{\alpha=\frac{1}{p-1}}$. On the other hand we observe that $\lambda_{k}(r)\in C^{2}([0,r_{1}])$ with
\begin{equation*}
    \lim_{r\rightarrow 0^{+}} \lambda_{k}(r)= 0 \;\;\mbox{and}\;\; \lim_{r\rightarrow 0^{+}} \lambda_{k}'(r)=\frac{F(v_{0},0)}{n}=\frac{f(v_{0})}{n}
\end{equation*}
Thus, using the mean value theorem we obtain that exist $L>0$ such that
\be
\label{eq:lbdlips}
    \left|\lambda_{k}(r)-\lambda_{k}(\ell)\right|\leq L\left|r-\ell\right|.
\ee
This means that $\lambda_{k}$ is locally Lipschitz continuous uniformly in $k$. Then, using the H\"{o}lder continuity of $H^{-1}(t)$ and \EQ{lbdlips}
\begin{eqnarray*}
 \left|(Tv_{k})'(r)-(Tv_{k})'(\ell)\right|&=& \left|(H^{-1}\circ \lambda_{k})(r)-(H^{-1}\circ \lambda_{k})(\ell)\right|\\
&\leq & \left|\lambda_{k}(r)- \lambda_{k}(\ell)\right|^{\frac{1}{p-1}}\\
&\leq & L^{\frac{1}{p-1}}\left|r- \ell\right|^{\frac{1}{p-1}}.
\end{eqnarray*}
Consequently $(Tv_{k})'(r)$ is H\"{o}lder continuous with exponent $\frac{1}{p-1}$ uniformly in $k$, which implies that $Tv_{k}(r)$ is equicontinuous also for $p>2$.

To prove the continuity of $T$ we observe that, if $ v_{k} \rightarrow v $ uniformly in $C^{1}(\left[0,r_{1}\right])$, then by the dominated convergence theorem for any sub-sequence $\left\{ v_{k_{j}} \right\}$ of $\left\{ v_{k} \right\}$ there is a further subsequence (still denoted by $v_{k_{j}}$), such that  $ T v_{k_{j}} \rightarrow T v$ in $C^{1}(\left[0,r_{1}\right])$. Thus follows the continuity of the integral operator $T$.

Consequently by the the Arzel\'{a}-Ascoli theorem we conclude that $T$ is  compact and continuous operator.

At last we need to chose an appropriate closed, convex and bounded subset such that the operator $T$ maps this set into itself.
Given $0<m_{2}<v_{0}$, $m_{3}>0$, by the continuity of $v$ and $v'$ we can choose a sufficiently small constant $r_{0}>0$ such that
\begin{equation*}
    \max_{0\leq r\leq r_{0}}\left\{\left|v(r)-v_{0}\right|\right\}\leq m_{2}, \qquad\mbox{and}\qquad
  \max_{0\leq r\leq r_{0}}\left\{\left|v'(r)\right|\right\}\leq m_{3}.
\end{equation*}
Denote, in case of $(P_+)$
\[
r_{\am}:=\max\left\{ (\frac{p}{p-1})^{\frac{p-1}{p}}\frac{ n^{\frac{1}{p}},\mdo^{\frac{p-1}{p}}}{(F(v_{0}+\mdo, \mtre))^{\frac{1}{p}}}\;\;,\;\; \frac{n\;\mtre^{p-1}}{F(v_{0}+\mdo, \mtre)}\right\}
\]
and in case of $(P_-)$
\[
r_{\am}:=\max\left\{(\frac{p}{p-1})^{\frac{p-1}{p}}\frac{ n^{\frac{1}{p}}\;\mdo^{\frac{p-1}{p}}}{(f(v_{0}+\mdo))^{\frac{1}{p}}}\;\;,\;\;\frac{n\;\mtre^{p-1}}{f(v_{0}+\mdo)}\right\}.
\]
Also denote the closed convex and bounded subset of the space $C^{1}(\left[0,r_{\alpha}\right])$ by
\begin{equation}
\label{eq:subcjto}
    B_{\alpha}=\left\{\phi\in C^{1}(\left[0,r_{\alpha}\right]); \;\; \left|\phi(s)-v_{0}\right|\leq m_{2}\;
,\;\left|\phi'(s)\right|\leq m_{3}\quad
\mbox{for all} \; 0\leq s\leq r_{\alpha}\right\}.
\end{equation}
We need to show that $T$ maps $B_{\alpha}$ into $B_{\alpha}$. For this we are going to show that $$\left|T\phi(r)-v_{0}\right|\leq m_{2}, \qquad\mbox{and}\qquad\left|(T\phi)'(r)\right|\leq m_{3}.$$
Since $f$ and $g$ are increasing we have  in case of $(P+)$ that $F(\phi(t), \phi'(t))\leq F(v_{0}+\mdo,\mtre)$ and in case of $(P-)$ that $F(\phi(t), \phi'(t))\leq F(v_{0}+\mdo,0)=f(v_{0}+\mdo)$. Also, using the monotonicity of $H^{-1}$ we have in case of $(P+)$
\begeqnat
\left|T\phi(r)-v_{0}\right|&\leq&\int^{r}_{0}\left|\exist{s}{s}{\phi(t)}{\phi'(t)}\right|\diff s\\
&\leq&\int^{r}_{0}H^{-1}\left(F(v_{0}+\mdo,\mtre)\frac{s}{n}\right)\diff s\\
&=& \frac{p-1}{p}\left(\frac{F(v_{0}+\mdo,\mtre)}{n}\right)^{\frac{1}{p-1}}r^{\frac{p}{p-1}}\leq \mdo.
\eeqnat
Analogously for the derivative
\begeqnat
\left|(T\phi)'(r)\right|&\leq&\left|\exist{s}{s}{\phi(t)}{\phi'(t)}\right|\\
&\leq&F(v_{0}+\mdo,\mtre)^{\frac{1}{p-1}} H^{-1}\left(\int^{r}_{0}\left(\frac{t}{r}\right)^{n-1}\diff t\right)\\
&=& \left(\frac{F(v_{0}+\mdo,\mtre)}{n}\right)^{\frac{1}{p-1}} r^{\frac{1}{p-1}}\leq \mtre.
\eeqnat
In the same way for $(P-)$
\bet
\left|T\phi(r)-v_{0}\right|\leq \frac{p-1}{p}\left(\frac{f(v_{0}+\mdo)}{n}\right)^{\frac{1}{p-1}}r^{\frac{p}{p-1}}\leq \mdo
\eet
and for the derivative
\bet
\left|T'\phi(r)\right|\leq  \left(\frac{f(v_{0}+\mdo)}{n}\right)^{\frac{1}{p-1}} r^{\frac{1}{p-1}}\leq \mtre.
\eet
Thus $T(B_{\alpha})\subset B_{\alpha}$, this means that $T$ is a continuous, compact mapping from $B_{\alpha}$ into $B_{\alpha}$. By the Leray-Schauder theorem the integral operator $T$ has a fixed point $v(r)=Tv(r)$ in $B_{\alpha}$. Therefore the problem has a radial solution $v\in C^{1}([0,r_{\alpha}])$.\\
On the other hand, by the definition of the operator we have that
\begin{equation*}
   Tw(r)=u(r)\in C^{2}((0,r_{\alpha}))\cap C([0,r_{\alpha}])
\end{equation*}
for each $w\in C^{1}([0,r_{\alpha}])$,
in particular, for the fixed point $v$. Consequently the radial solution of the problem is such that $v(r)\in C^{2}((0,r_{\alpha}))\cap C([0,r_{\alpha}])$. Taking $r_{1}<r_{\alpha}$ we obtain the desired result.
\end{proof}

Note that the above nonlinear technique gives existence for the ODE only for some finite interval of $r$, or, equivalently, existence for $(P\pm)$ in some (possibly small) ball. As we are interested in the existence or not of non-negative solutions in the whole $\rn$, a first candidate for the existence is exactly this radial solution, provided we are able to check that it actually exists globally.

In the following lemma  we show explicitly, in case of $(P+)$, that under some hypotheses any radial solution of the problem exist on a maximal {\it finite} interval. So, under these hypotheses, the candidate fails. In the next section we are going to see that the opposite hypotheses are necessary for the existence.
\begin{lem}
\label{lem:lemageneral}
Let $f$ and $g$ satisfy \EQ{fgcond}. Then, the following statements hold.
\begin{itemize}
\item[(i)] Under the assumptions in \EQ{condintext} (\THM{first} $(i)$) any solution of
\pvi{eq:edoma}{\left(r^{n-1}(\vl)^{p-1}\right)'}{r^{n-1}\left(f(v)+ g(\vl)\right)}
 exists on a maximal interval $(0,R)$ where $0<R<\infty$ and it is such that
 \bet
 v'(r)\rightarrow \infty\;\; \mbox{as}\;\; r\rightarrow R.
 \eet
 \item[(ii)] Furthermore,
 \be
 \label{eq:vnoexplode}
 v(r)\xrightarrow[r\rightarrow R]{} \infty\;\Longleftrightarrow\;
\int^{\infty} _{1}\frac{s^{p-1}}{g(s)}\diff s=\infty.
 \ee
\end{itemize}
\end{lem}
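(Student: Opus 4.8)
The plan is to first pin down the maximal interval of existence and then use each branch of \EQ{condintext} to force it to be finite. I would invoke the local existence result \LEM{solmax} together with the monotonicity established in \LEM{prad}: as long as $v'$ stays bounded on $[0,r]$ the bound $v(r)\le v_0+rv'(r)$ keeps $v$ bounded, so the solution can be restarted and prolonged. Hence, writing the maximal interval as $(0,R)$, we must have $v'(r)\to\infty$ as $r\to R$ whenever $R<\infty$; and since $v''\ge0$ makes $v'$ nondecreasing, a global solution ($R=\infty$) would force $v'\to L\in(0,\infty]$. The value $L<\infty$ is excluded because then $((v')^{p-1})'\to 0$ along a sequence while the right-hand side of \EQ{edoma} stays $\ge f(v_0)>0$. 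It therefore suffices to assume $R=\infty$ (so $v,v'\to\infty$) and reach a contradiction.

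The second ingredient is a pair of elementary integral-to-pointwise facts, both obtained by the same dyadic device: if $\int_1^\infty F(s)^{-1/p}\diff s<\infty$ then $F(s)/s^{p}\to\infty$, and if $\int_1^\infty s^{p-2}/g(s)\,\diff s<\infty$ then $g(s)/s^{p-1}\to\infty$. Indeed, if $F(s_k)\le Cs_k^{p}$ (resp. $g(s_k)\le Cs_k^{p-1}$) held along some $s_k\to\infty$, monotonicity of $F$ (resp. $g$) would make the integrand $\gtrsim 1/s$ on each $[s_k/2,s_k]$, and summing over a dyadically separated subsequence would contradict convergence. With this, the $g$-branch is direct: for $r\ge1$ and $v'$ large one has $g(v')\ge 2(n-1)(v')^{p-1}\ge \tfrac{2(n-1)}{r}(v')^{p-1}$, so \EQ{edoma} gives $((v')^{p-1})'\ge\tfrac12 g(v')$; separating variables yields $\tfrac{(p-1)s^{p-2}}{g(s)}\diff s\ge\tfrac12\diff r$ with $s=v'$, and integrating up to $R$ bounds $R$ by $2(p-1)\int^\infty s^{p-2}/g(s)\,\diff s<\infty$, contradicting $R=\infty$. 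In the $f$-branch I would instead regard $v'$ as a function of $v$ (legitimate since $v$ is strictly increasing), writing $P(v)=v'$, so that \EQ{edoma} reads $\tfrac{p-1}{p}\tfrac{d}{dv}(P^{p})+\tfrac{n-1}{r(v)}P^{p-1}=f(v)+g(P)\ge f(v)$; using $f(v)\ge F(v)/v$ and $F(v)\ge(2(n-1))^{p}v^{p}$ for large $v$, a barrier/bootstrap argument yields $P^{p}\ge cF(v)$, i.e. $v'\ge c^{1/p}F(v)^{1/p}$, whence $R-1=\int_{v(1)}^{\infty}\diff v/P(v)\le c^{-1/p}\int^\infty F(v)^{-1/p}\diff v<\infty$, again a contradiction.

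For part (ii) I would keep the substitution $\tau=v'$ near $r=R$, where $\tfrac{dv}{d\tau}=\tfrac{(p-1)\tau^{p-1}}{f(v)+g(\tau)-\frac{n-1}{r}\tau^{p-1}}$. If $\int^\infty s^{p-1}/g(s)\,\diff s=\infty$ and, for contradiction, $v(R)=V<\infty$, then $f(v)\le f(V)$ makes the denominator $\le 2g(\tau)$ for large $\tau$, so $\tfrac{dv}{d\tau}\ge\tfrac{p-1}{2}\,\tau^{p-1}/g(\tau)$ integrates to $+\infty$, contradicting $v\le V$. Conversely, if $\int^\infty s^{p-1}/g(s)\,\diff s<\infty$ then the dyadic fact gives $g(\tau)/\tau^{p-1}\to\infty$, so $\tfrac{n-1}{r}\tau^{p-1}=o(g(\tau))$ near $R$ and the denominator is $\ge\tfrac12 g(\tau)$; hence $\tfrac{dv}{d\tau}\le 2(p-1)\,\tau^{p-1}/g(\tau)$ is integrable and $v$ remains bounded. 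These two implications are exactly \EQ{vnoexplode}.

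The main obstacle throughout is the non-autonomous curvature term $\tfrac{n-1}{r}(v')^{p-1}$, which opposes the blow-up and cannot simply be discarded; the pointwise domination $g(s)\gg s^{p-1}$, respectively $F(s)\gg s^{p}$, extracted from the integral conditions is precisely what lets me absorb it. I expect the most delicate point to be the lower bound $v'\gtrsim F(v)^{1/p}$ in the $f$-branch, which requires the barrier/bootstrap argument rather than a one-line energy estimate, and where the degenerate or singular exponent $p-1$ (the source of the case split $p\lessgtr2$ elsewhere in the paper) demands care when changing variables for $p\ne2$.
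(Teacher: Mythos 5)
Your argument is sound in outline and reaches the right conclusions, but it travels a genuinely different and more laborious road than the paper. The paper's key device is to integrate the equation once, obtaining
\begin{equation*}
(v')^{p-1}(r)=\frac{1}{r^{n-1}}\int_0^r s^{n-1}\bigl(f(v)+g(v')\bigr)\diff s\le \frac{r}{n}\bigl(f(v(r))+g(v'(r))\bigr),
\end{equation*}
which absorbs the curvature term for \emph{every} $r>0$ and yields the clean differential inequality $\bigl((v')^{p-1}\bigr)'\ge \frac1n\bigl(f(v)+g(v')\bigr)$. From this, both branches of \EQ{condintext} finish in one line: multiplying by $v'$ and integrating gives $v'\ge c\,(F(v)-F(v_0))^{1/p}$, and dividing by $g(v')$ gives the bound on $\int^{v'(r)} s^{p-2}/g(s)\diff s$; separation of variables then bounds $r$ directly, with no need to assume $R=\infty$ first, no dyadic lemma, and no bootstrap. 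You instead extract the pointwise dominations $F(s)\gg s^{p}$ and $g(s)\gg s^{p-1}$ from the integral conditions and use them to defeat the term $\frac{n-1}{r}(v')^{p-1}$ for large $r$; your dyadic facts are correct and your $g$-branch and part (ii) then go through essentially as in the paper (part (ii) in both treatments amounts to comparing $\diff v$ with $s^{p-1}g(s)^{-1}\diff s$ under $s=v'$, and your handling of the two directions is fine). What your approach buys is a self-contained quantitative statement about $F$ and $g$ that is reusable elsewhere; what it costs is the extra scaffolding ($R=\infty$ reduction, continuation/restart at interior points, largeness of $v'$) and, above all, the $f$-branch.

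That $f$-branch is the one place where your proposal is not actually a proof: the claim $P^{p}\ge cF(v)$ is asserted via an unspecified ``barrier/bootstrap argument.'' It can be completed --- setting $E=\frac{p-1}{p}P^{p}$, one checks that on the set where $E<F(v)$ one has $P^{p-1}\le F(v)^{(p-1)/p}\le f(v)/(2(n-1))$ for $v$ large (using $F(v)\le vf(v)$ and $F(v)^{1/p}\ge 2(n-1)v$), hence $E'\ge\frac12 f(v)$ there; a first-exit/last-crossing argument on the sets $\{E<F\}$ and $\{E\ge F\}$ then gives $E(v)\ge\frac14 F(v)$ for $v$ large --- but this is precisely the work you have deferred, and it is exactly the step the paper's integration trick renders unnecessary. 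I recommend you either write out that two-case argument in full or replace the whole absorption step by the paper's one-line estimate above.
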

\begin{rmk}
Note that the integral condition in \EQ{vnoexplode} and \EQ{kog} are not exhaustive. In fact for $g(t)=t^{q}$ where $q>p$ neither of them is satisfied.
\end{rmk}
\begin{proof}[Proof of item (i)]
Integrating from $0$ to $r$ the equation in \EQ{edoma}, by the monotonicity of $f$ and $g$ and the a priori properties in \LEM{prad} we obtain that
\begin{eqnarray*}
(v')^{p-1}&=&\frac{1}{r^{n-1}}\int_{0}^{r}{s^{n-1}\left(f(v(s))+g(v'(s))\right)}\diff s\\
          &\leq& \frac{r}{n}\left(f(v(r))+g(v'(r))\right).
\end{eqnarray*}
Then using the rewritten version  \EQ{edomp} of \EQ{edoma} results in
\begin{eqnarray*}
\left(\left(v'\right)^{p-1}\right)'&\geq&f(v)+g(v')-\frac{n-1}{n}\left(f(v(r))+g(v'(r))\right)\\
&=&\frac{1}{n}\left(f(v(r))+g(v'(r))\right).
\end{eqnarray*}
thus $\left(\left(v'\right)^{p-1}\right)'\geq \frac{1}{n}f(v(r))$.
Multiplying the last inequality by $\frac{p}{p-1}v'\geq 0$ we have
\begin{equation*}
\frac{p}{p-1}\left(\left(v'\right)^{p-1}\right)'v'\geq \frac{p}{p-1}\frac{1}{n}\left(F(v)\right)',
\end{equation*}
then integrating from $0$ to $r$
\begin{eqnarray*}
\int^{r}_{0} p(v')^{p-1}v''\diff r&\geq&\frac{p}{n(p-1)}\int^{r}_{0} f(v(r))v'(r)\diff r\\
\int^{r}_{0} ((v')^{p})'\diff r&\geq&\frac{p}{n(p-1)}(F(v(r))-F(v_{0})).
\end{eqnarray*}
Thus
\bet
\vl\geq \left(\frac{p}{n(p-1)}\right)^{\frac{1}{p}}\left(F(v)-F(v_{0})\right)^{\frac{1}{p}}.
\eet
Dividing by $\left(F(v)-F(v_{0})\right)^{\frac{1}{p}}$ and taking the integral from $0$ to $r$ we obtain
\begin{equation*}
\int_{0}^{r}{\frac{v'(s)}{\left(F(v(r))-F(v_{0})\right)^{\frac{1}{p}}}\diff s}\geq \left(\frac{p}{n(p-1)}\right)^{\frac{1}{p}}r,
\end{equation*}
i.e,
\begin{equation}
\label{eq:ineqf}
\int_{v_{0}}^{v(r)}{\frac{1}{\left(F(s)-F(v_{0})\right)^{\frac{1}{p}}}\diff s}\geq \left(\frac{p}{n(p-1)}\right)^{\frac{1}{p}}r .
\end{equation}
Analogously
\begin{equation}
\label{eq:resintg}
\left(\left(v'\right)^{p-1}\right)'\geq \frac{1}{n} g(v'(r)).
\end{equation}
Dividing by $g(v')$ and integrating from some $r_{0}>0$ to $r>r_{0}$ we obtain
\begin{eqnarray*}
\frac{\left(\left(v'\right)^{p-1}\right)'}{g(v'(r))}&\geq & \frac{1}{n}\\
\int^{r}_{r_{0}}\frac{\left(\left(v'\right)^{p-1}\right)'}{g(v'(r))}\diff r&\geq&\frac{1}{n}(r-r_{0}),
\end{eqnarray*}
then
\begin{equation}
\label{eq:ineqg}
\int^{v'(r)}_{v'(r_{0})}\frac{s^{p-2}}{g(s)}\diff s\geq\frac{1}{n}(r-r_{0}).
\end{equation}
Since $F(v_{0})$ is a constant it does not affect the convergence of the integral in the inequality \EQ{ineqf}. Using the assumptions \eqref{eq:condintext} of the lemma we see that at least one of the integrals on the left side of \EQ{ineqf} and \EQ{ineqg} stays bounded independently of $r$. Thus $r$ is bounded above, consequently $R$ is finite.\\
Observe that the fact that the maximal interval be finite can be due to $v(r)\rightarrow\infty$ or $v'(r)\rightarrow\infty$ when $r$ tends to $R$. Since $v\;,\; v'$ are increasing and $v(r)\leq v_{0}+ R v'(r)$ (as we showed in \LEM{prad}) we conclude that $v'(r)\rightarrow\infty$ as $r$ tends to $R$.
\end{proof}
\begin{proof}[Proof of item (ii)]
Suppose that the integral in the right hand side is finite. Then multiplying \EQ{resintg} by $v'$ and dividing by $g(\vl)$ we obtain
\bet
(p-1)\vlp \frac{\vll}{g(\vl)}\geq \frac{1}{n} \vl
\eet
and integrating from any $r_{0}$ to $r$ such that $0<r_{0}<r<R$
\bet
    v(r)-v(r_{0})\leq n(p-1)\int^{\vl(r)}_{\vl(r_{0})}\frac{s^{p-1}}{g(s)}\diff s <\infty
\eet
as $r\to R$. This means that $v(r)$ is bounded.\\
On the other hand if we suppose that exist a finite constant $C>0$ such that $v(r)\leq C$ when $r\rightarrow R$, then in $(0,R)$
\bet
    \vlpl\leq \vlpl+\frac{n-1}{r}\vlp\leq f(C)+g(\vl).
\eet
Multiplying this inequality by $v'$ and dividing by $g(\vl)$ we obtain after integration from any $r_{0}$ to $r$ such that $0<r_{0}<r<R$
\begin{equation*}
\int^{v'(r)}_{v'(r_{0})}\frac{s^{p-1}}{g(s)}\diff s\leq c(v(r)-v(r_{0})).
\end{equation*}
Letting $r\rightarrow R$, since we know that $v'(r)\rightarrow \infty$ when $r\rightarrow R$ we obtain that the right hand side of \EQ{vnoexplode} is finite, as we want.
\end{proof}

\section{Existence and nonexistence of solutions for $P_{+}$}\label{sec:entsol}
In this section we prove  \THM{first} which ensures the existence or not of positive solutions to the equation
\begin{equation}
\plap u = f(u)+g(\left|\nabla u\right|).
\end{equation}
The first item is relative to the non existence of positive weak solutions. Comparing the solution of \EQ{pmmaedo} with eventual solutions of  our equation will lead us to the nonexistence result.
\begin{proof}[Proof of \THM{first} (i)]
Suppose by contradiction that the conclusion is false. That is, we assert that exist $w\in W_{\loc}^{1,\infty}(\rn)$ such that $w\geq 0$, $w \neq 0$, satisfies
\bet
\plap w \geq f(w)+ g(\left|\nabla w\right|).
\eet
 We can suppose without loss of generality that $w(0)>0$. In fact, since $w \neq 0$, there exists  a $x_{0}$ such that $w(x_{0})>0$. Then we can take $\widetilde{w}(x)=w(x+x_{0})$ instead of $w$.

Let $v(r)=v(\mo{x})=u(x)$ be a radial solution of \EQ{Pmas} defined in a maximal interval $(0,R)$ with $0<R<+\infty$ which we obtained in  \LEM{solmax}  and \LEM{lemageneral}, setting  $v_{0}=\frac{w(0)}{2}$ in \LEM{lemageneral}.

First we affirm that $v$ is bounded when $r\rightarrow R$. Indeed, if $v(r)\rightarrow \infty$ as $r\rightarrow R$ we can take $\epsilon>0$ sufficiently small so that $u > w$ in $\partial B_{R-\epsilon}(0)$. Then by using the comparison principle we have  that $u\geq w$ in $B_{R-\epsilon}(0)$ which contradicts  $v_{0}<w(0)$.

Now by applying the comparison principle once again in the whole ball $B_{R}(0)$ we obtain the existence of $\bar{x}\in\partial B_{R}(0)$ such that $w(\bar{x})>u(\bar{x})$. Then we can take $a>0$ such that the function $u_{a}=u + a$ satisfies $w(x)\leq u_{a}(x)$ for all $x\in \partial B_{R}(0)$ and $w(x_0)=u_a(x_0)$ for some $x_0\in \partial B_{R}(0)$. Specifically, $a=\sup_{x\in\partial B_{R}(0)}(w(x)-u(x))$.\\
We see that
\begin{equation*}
\plap u_{a}\leq f(u_{a})+ g(\nabla u_{a}) \ \ \ \ \mbox{in} \; B_{R}(0)
\end{equation*}
by the monotonicity of $f$. Then by the comparison principle we can infer
\begin{equation*}
w(x)\leq u_{a}(x) \; \forall \; x\in B_{R}(0).
\end{equation*}
Since $w(x_0)=u_a(x_0)$ this obviously implies
\begin{equation*}
\frac{ u_a(x_0+t\nu)-u_a(x_0)}{t}\ge \frac{ w(x_0+t\nu)-w(x_0)}{t},
\end{equation*}
for all small $t>0$, where $\nu=-x_0/|x_0|$ is the interior normal to the boundary of the ball at $x_0$. When $t\to0$, the left-hand side of this inequality tends to $-\infty$, by  \LEM{lemageneral} (i). Hence the right-hand side is unbounded, but this contradicts  $w\in W_{\loc}^{1,\infty}(\rn)$. Thus we can conclude that  the only nonnegative subsolution is the trivial one.
\end{proof}
\begin{rmk}
We recall that we adopt the usual definition of solution for the $p-$Laplacian, in the weak-Sobolev sense. If we suppose we have a weak solution in the sense of viscosity, which is less common, the statement is also satisfied.
\end{rmk}

\begin{rmk}\label{rem:weaklp}
Let us now give the detailed explanation of the statement given in \REM{rem1}.

In the above proof of the \THM{first} (i) we make fundamental use of the comparison principle. However, to our knowledge, for our type of equations  with dependence on the gradient the comparison principle is available only for functions in  $W_{\loc}^{1,\infty}(\rn)$. This forces us to make to assume our solutions are in this class. On the other hand, if a comparison principle in $W_{\loc}^{1,p}(\rn)$ were available, then we can affirm the same non-existence result as above for subsolutions in the Sobolev space $W_{\loc}^{1,p}(\rn)$, by imposing the additional integral condition
\be\label{eq:adcond}
\int_{1}^{\infty}{\frac{s^{2(p-1)}}{g(s)}}\diff s= \infty.
\ee
In fact, in this case we can repeat the above proof until the comparison between $w(x)$ and $u_{a}(x)$ in $B_{R}(0)$.

We will now show that under \eqref{eq:adcond} the radial function $u$ in the above proof is not in $W^{1,p}$ in a neighbourhood of the boundary of the ball, so again by comparison, the same must be true for $w$, which is a contradiction.

 Observe that if we multiply $(P+)$, i.e. the equation in \EQ{edomp}, by $\displaystyle{\frac{(v')^{p}}{g(v')}}$ we have
\begin{eqnarray*}
(v')^{p}&=&\frac{\left(\left(v'\right)^{p-1}\right)'\left(v'\right)^{p}}{g(v')}+\frac{n-1}{r}(v')^{p-1}\frac{(v')^{p}}{g(v')}-\frac{f(v)}{g(v')}(v')^{p}\\
        &=&(p-1)\frac{\left(v'\right)^{2(p-1)}}{g(v')}v''+\frac{n-1}{r}\frac{\left(v'\right)^{2p-1}}{g(v')}-\frac{f(v)}{g(v')}(v')^{p}.
\end{eqnarray*}
Now integrating from some $r_{0}>0$ to $r$, with $0<r_{0}<r<R$ results in
\be
\label{eq:pertlp}
\int_{r_{0}}^{r}{(v')^{p}}\diff r= (p-1)\int_{r_{0}}^{r}{\frac{\left(v'\right)^{2(p-1)}}{g(v')}v''}\diff r+(n-1)\int_{r_{0}}^{r}{\frac{\left(v'\right)^{2p-1}}{ r g(v')}}\diff r-\int_{r_{0}}^{r}{\frac{f(v)}{g(v')}(v')^{p}}\diff r .
\ee
Since  $v'(r)\rightarrow \infty$ as $r\rightarrow R$ taking $r\rightarrow R$ in \EQ{pertlp} and substituting  $s=v'$  in the first term in right hand side gives
\begin{equation}
\label{eq:pertlp1}
\int_{r_{0}}^{R}{(v')^{p}}\diff r= (p-1)\int_{v'(r_{0})}^{\infty}{\frac{\left(s\right)^{2(p-1)}}{g(s)}}\diff s+(n-1)\int_{r_{0}}^{R}{\frac{\left(v'\right)^{2p-1}}{ r g(v')}}\diff r-\int_{r_{0}}^{R}{\frac{f(v)}{g(v')}(v')^{p}}\diff r .
\end{equation}
Observe that by the monotonicity of $f$ and $g$ and since $v$ is bounded when $r\rightarrow R$ in $(r_{0},R)$
\begin{eqnarray*}
f(v(r))&\leq & f(v(R))\leq f(C)=C_{0}\\
g(v'(r))&\geq& g(v'(r_{0}))>0,
\end{eqnarray*}
then
\begin{equation*}
\int_{r_{0}}^{R}{\frac{f(v)}{g(v')}(v')^{p}}\diff r \leq \frac{C_{0}}{g(v'(r_{0}))}\int_{r_{0}}^{R}{(v')^{p}}\diff r.
\end{equation*}
Substituting this in the equation \EQ{pertlp1} we obtain that
%\begin{equation}
\begin{multline}
\int_{r_{0}}^{R}{(v')^{p}}\diff r\geq (p-1)\int_{v'(r_{0})}^{\infty}{\frac{s^{2(p-1)}}{g(s)}}\diff s+(n-1)\int_{r_{0}}^{R}{\frac{\left(v'\right)^{2p-1}}{ r g(v')}}\diff r\\-\frac{C_{0}}{g(v'(r_{0}))}\int_{r_{0}}^{R}{(v')^{p}}\diff r .
\end{multline}
%\end{equation}
Then,
\begeqnat
\int_{r_{0}}^{R}{(v')^{p}}\diff r&\geq& \frac{1}{1+\frac{C_{0}}{g(v'(r_{0}))}}\left((p-1)\int_{v'(r_{0})}^{\infty}{\frac{s^{2(p-1)}}{g(s)}}\diff s+(n-1)\int_{r_{0}}^{R}{\frac{\left(v'\right)^{2p-1}}{ r g(v')}}\diff r\right)\\
&\geq & \frac{(p-1)}{1+\frac{C_{0}}{g(v'(r_{0}))}}\int_{v'(r_{0})}^{\infty}{\frac{s^{2(p-1)}}{g(s)}}\diff s.
\eeqnat
By the additional integral condition \EQ{adcond} we obtain that the integral on the right hand side in the last inequality is divergent, so
\begin{equation}
\label{eq:nolp}
\int_{r_{0}}^{R}{(v')^{p}}\diff r=\infty.
\end{equation}
\end{rmk}

%%%%%%%%%%%%%%%%%%%%%%%%%%%%%%%%%%%%%%%%%%%%%%%%%%%%%%%%%%%%%%%%%%
\begin{rmk}
Observe that the statement \EQ{vnoexplode} in \LEM{lemageneral} is not used in  the above nonexistence proof. We included it in \LEM{lemageneral} in order to show that the radial solution does not always explode when $r$ tends to $R$. This is because of the dependence on the right hand side of the gradient. However  if $g$ is such that $\ds{\int_{1}^{\iy}\frac{s^{p-1}}{g(s)}\diff s=\iy}$, so $v$ actually explodes on the boundary of the ball, we obtain a much simpler proof of the nonexistence, since already the first comparison argument in the proof of \THM{first} (i) gives a contradiction.\medskip

We summarize the sufficient integral conditions that yield qualitative properties on $v$ and $v'$ for radial equations with dependence on the gradient. \\ \\
\begin{tabular}{l c r}
Result in Text & Integral Condition  & Properties from $v$ and $v'$ \\ \\
Not (KO) \EQ{kog} & $\displaystyle{\int^{+\infty}{\frac{s^{p-2}}{g(s)}}\diff s}< +\infty$ & $v'(r)\xrightarrow[r\rightarrow R]{}\infty$\\ \\
\LEM{lemageneral}(ii)   & $\displaystyle{\int^{+\infty}{\frac{s^{p-1}}{g(s)}}\diff s< +\infty}$ & $v'(r)\xrightarrow[r\rightarrow R]{}\infty$,
\vspace{0.2cm}\\ && $\scriptstyle{\mbox{and}}$\vspace{0.2cm}\\
&& $v(r)\leq M$ with $0<M<\infty$ \\ \\
\REM{weaklp} & $\displaystyle{\int^{+\infty}{\frac{s^{2(p-1)}}{g(s)}}\diff s= +\infty}$ & $v'(r)\notin L^{p}(0,R)$\\ \\
\end{tabular}
\end{rmk}

We now continue with the proof of the existence result in \THM{first}.
\begin{proof}[Proof of \THM{first} (ii)]
We want to prove that we have a positive solution in the whole $\rn$. For this purpose we are going to prove that $v$, the radial classical solution that we already proved to exist in a neighborhood of zero, is defined for all $r>0$. Suppose by contradiction that the solution exists in a finite maximal interval $(0,R)$, $R<\iy$ is finite.

First, we affirm that $v(r)\rightarrow \infty$ and $v'(r)\rightarrow\infty$ as $r\rightarrow R$.

In fact suppose that $v(r)$ is bounded as $r\rightarrow R$. By the continuity and monotonicity of $v$ and $f$
\begin{equation*}
((v')^{p-1})'\leq f(C)+g(v').
\end{equation*}
Dividing by $g(v')$ and integrating from $r_{0}$ to $r$ with $0<r_{0}<r<R$ we obtain
\begin{equation*}
(p-1)\int_{v'(r_{0})}^{v'(r)} \frac{s^{p-2}}{g(s)} \diff s\leq C(r-r_{0})
\end{equation*}
and letting $r\rightarrow R$
\begin{equation*}
(p-1)\int_{v'(r_{0})}^{\infty} \frac{s^{p-2}}{g(s)} \diff s\leq C(R-r_{0}),
\end{equation*}
which is a contradiction with our condition \EQ{kog}. The conclusion about $v'$ follows from \LEM{prad} in the same way as in the proof of \LEM{lemageneral}.

Now let's define
\begin{equation}
A(r)=\frac{r^{n-1}v'}{(F(v(r)))^\frac{1}{p}}
\end{equation}
and  separate the proof into several cases, according to the asymptotic behavior of $A(r)$ as $r\rightarrow R$.
\begin{itemize}
    \item []\textit{Case 1} Suppose that $A(r)$ is bounded when $r\rightarrow R$.
\end{itemize}
Then taking the integral between $R/2$ and any $r\in (R/2, R)$ we obtain
\begin{equation*}
\left(\frac{R}{2}\right)^{n-1}\int^{r}_{R/2}\frac{v'}{F(v(s))^{\frac{1}{p}}}\diff s \leq \int^{r}_{R/2}s^{n-1}\frac{v'}{F(v(s))^{\frac{1}{p}}}\diff s =\int^{r}_{R/2}A(s) \diff s\leq c (r-R/2)
\end{equation*}
Letting $r\rightarrow R$ we obtain that the term to the right is bounded. This is a contradiction with the hypothesis of the condition \EQ{kof} since
\begin{equation*}
\left(\frac{R}{2}\right)^{n-1}\int^{r}_{R/2}\frac{v'}{F(v(s))^{\frac{1}{p}}}\diff s= \left(\frac{R}{2}\right)^{n-1}\int^{v(r)}_{v(R/2)}\frac{1}{F(s)^{\frac{1}{p}}}\diff s.
\end{equation*}
\begin{itemize}
    \item []\textit{Case 2}
Suppose that $A(r)\rightarrow \infty$ when $r\rightarrow R$.
\end{itemize}
 Let $w=(v')^{p-1}$ and $H=F(v)^{\frac{1}{p}}$, thus the assumption of this case can be written as
\begin{equation}\label{eq:asscase2}
\frac{H}{w^{\frac{1}{p-1}}}\xrightarrow[r\rightarrow R]{} 0
\end{equation}
and the problem with the specific equation as in \EQ{edomp} can be now rewritten as
\begin{equation}\label{eq:pmdmud}
\begin{cases}
w'+\frac{n-1}{r}w&=p \frac{H^{p-1}H'}{w^{\frac{1}{p-1}}}+g\left(w^{\frac{1}{p-1}}\right)\\
\;\;\;\;\;\;\;\;\;\; w(0)&=0.
\end{cases}
\end{equation}
By the properties shown in \LEM{prad} we have that $w\geq 0$ which implies that
\begin{equation}\label{eq:pconta}
w'\leq p \frac{H^{p-1}H'}{w^{\frac{1}{p-1}}}+g\left(w^{\frac{1}{p-1}}\right).
\end{equation}
On the other hand we can fix $r_{0}>0$ such that $w\geq 1$ in $(r_{0},R)$, by the convergence to infinity of $w$ when $r$ tends to $R$.
Let $S=S(t)$ be the solution  of the initial value problem
\begin{equation*}
\begin{cases}
S'(t)&=(p-1)g\left(S^{\frac{1}{p-1}}(t)\right)\\
\; S(1)&=1,
\end{cases}
\end{equation*}
defined implicitly by
\begin{equation*}
t=1+\int^{S(t)}_{1}\frac{\sigma^{p-2}}{g(\sigma)}\diff \sigma \;\;\; t\in [1,\infty).
\end{equation*}
From the hypothesis of $g$ and the condition \EQ{kog} we deduce that $S$ is bijective from $[1,\infty)$ to $[1,\infty)$. Then by taking the inverse of this function we get
\begin{equation*}
\left(S^{-1}\left(w \right)\right)'=\frac{w'}{S'\left(S^{-1}(w)\right)}=\frac{w'}{(p-1)g(w^{\frac{1}{p-1}})}.
\end{equation*}
Then, by \EQ{pconta} we have
\begin{equation}\label{eq:casi}
\left(S^{-1}\left(w\right)\right)'\leq \frac{1}{p-1}+\frac{p}{p-1}\frac{H^{p-1} H'}{w^{\frac{1}{p-1}}g(w^{\frac{1}{p-1}})}.
\end{equation}
On the other hand,
\begin{equation*}
\left(S^{-1}\left(H^{p-1}\right)\right)'=\frac{H'H^{p-2}}{g\left(H\right)}.
\end{equation*}
By substituting this in \EQ{casi} we get
\begin{eqnarray}\label{eq:esta}
\left(S^{-1}\left(w\right)\right)'&\leq& \frac{1}{p-1}+\frac{p}{p-1}\frac{H^{p-1}}{w^{\frac{1}{p-1}}}\frac{g\left(H\right)}{g(w^{\frac{1}{p-1}}) H^{p-2}} \left(S^{-1}\left(H\right)\right)'\\
&=&\frac{1}{p-1}+\frac{p}{p-1}\frac{H}{w^{\frac{1}{p-1}}}\frac{g\left(H\right)}{g(w^{\frac{1}{p-1}})} \left(S^{-1}\left(H\right)\right)'.
\end{eqnarray}
We also know by the monotonicity of $f$ that $H$ is increasing; hence we can choose $r_{1}$ such that $H\geq 1$ in $(r_{1},R)$.

Since \EQ{asscase2} is satisfied and  $g$ is monotone, we can say that exist $r_{2}>r_{1}$ such that
\begin{equation*}
\frac{p}{p-1}\frac{H}{w^{\frac{1}{p-1}}}\frac{g\left(H\right)}{g(w^{\frac{1}{p-1}})}\leq \frac{1}{p}
\end{equation*}
in $(r_{2},R)$. Then
\begin{equation*}
(S^{-1}(w))'\leq 1+\frac{1}{p} (S^{-1}(H))' \;\; \mbox{in} \;\; (r_{2}, R).
\end{equation*}
Integrating from $r_{1}$ to $r \in(r_{2}, R)$
\begin{eqnarray*}
(S^{-1}(w))'&\leq & S^{-1}(w(r_{1}))+\frac{1}{p-1}(R-r_{1})+\frac{1}{p} S^{-1}(H^{p-1}(r))\\
&-&\frac{1}{p} S^{-1}(H^{p-1}(r_{2})) +\int_{r_{1}}^{r_{2}}\frac{p}{p-1}\frac{H}{w^{\frac{1}{p-1}}}\frac{g(H)}{g(w^{\frac{1}{p-1}})}S^{-1}(H^{p-1})\\
&=& C(r_{1}, r_{2},p,R) +\frac{1}{p} S^{-1}(H^{p-1}(r))
\end{eqnarray*}
where $C(r_{1}, r_{2},p,R)$ is a constant independent of $r$.

Since $F$ is increasing we have that $H(r)\rightarrow \infty$ as $r\rightarrow R$. Hence by the definition of $S$ and the condition \EQ{kog} $S^{-1}(H(r))\rightarrow \infty$ as $r\rightarrow R$. Then we find an $r_{3}\in(r_{1},R)$ such that
\begin{equation*}
S^{-1}(w)<S^{-1}(H^{p-1}) \;\; \mbox{in} \;\; (r_{3},R).
\end{equation*}
Since $S^{-1}$ is increasing this means that $w< H$ in $(r_{3},R)$. But this is a contradiction with the fact of $\frac{H}{w^{\frac{1}{p-1}}}\rightarrow 0$ as $r\rightarrow R$.
\begin{itemize}
    \item []\textit{Case 3} Assume that we are in none of the above cases, i.e. $A(r)$ is neither bounded nor tends to infinity as $r\rightarrow R, \; r<R$.
\end{itemize}
 This means that
\begin{equation*}
\limsup_{r\rightarrow R} A(r)=+\infty \;\; \mbox{and} \;\; A_{0}:=\liminf_{r\rightarrow R} A(r)<+\infty .
\end{equation*}
Then for each $\hat{A}>A_{0}$ we can find sequences $s_{n}, t_{n}\rightarrow R$ such that
\begin{equation*}
A(s_{n})=A(t_{n})=\hat{A} \;\;\;\; A'(s_{n})\geq 0 \;\;\;\; A'(t_{n})\leq 0.
\end{equation*}
The derivative of $A$ is
\begin{eqnarray}
\label{eq:deriv}
A'(r)&=&\frac{(r^{n-1}v')' F^{\frac{1}{p}}(v)-(r^{n-1}v')(F^{\frac{1}{p}}(v))'}{F^{\frac{2}{p}}(v)}\\
     &=&\frac{(r^{n-1}v')'}{F^{\frac{1}{p}}(v)}-\frac{r^{n-1}v'}{F^{\frac{2}{p}}(v)}\left(\frac{1}{p}F^{\frac{1}{p}-1}(v) f(v)v'\right)\nonumber\\
     &=&\frac{(r^{n-1}v')'}{F^{\frac{1}{p}}(v)}-\frac{r^{n-1}(v')^{2}f(v)}{p F^{\frac{1}{p}+1}(v)}.\nonumber
\end{eqnarray}
Since $v'\geq 0$ multiplying \EQ{deriv} by $(v')^{p-2}$
\begin{equation*}
A'(r)(v')^{p-2}=\frac{(r^{n-1}v')'(v')^{p-2}}{F^{\frac{1}{p}}(v)}-\frac{r^{n-1}(v')^{p}f(v)}{p F^{\frac{1}{p}+1}(v)}.
\end{equation*}
Adding and subtracting the factor $(p-2)r^{n-1}\frac{(v')^{p-2}v''}{F^{\frac{1}{p}}}$ on the right hand side gives
\begin{align*}
& A'(r)(v')^{p-2}\\
=&\frac{(r^{n-1}v')'(v')^{p-2}}{F^{\frac{1}{p}}(v)}+(p-2)r^{n-1}\frac{(v')^{p-2}v''}{F^{\frac{1}{p}}}-(p-2)r^{n-1}\frac{(v')^{p-2}v''}{F^{\frac{1}{p}}}-\frac{r^{n-1}(v')^{p}f(v)}{p F^{\frac{1}{p}+1}(v)}\\
=&\frac{(r^{n-1})'(v')^{p-1}+ r^{n-1} (v')^{p-2}v''}{F^{\frac{1}{p}}(v)}+(p-2)r^{n-1}\frac{(v')^{p-2}v''}{F^{\frac{1}{p}}}\\
&\hspace{4.8cm}-(p-2)r^{n-1}\frac{(v')^{p-2}v''}{F^{\frac{1}{p}}}-\frac{r^{n-1}(v')^{p}f(v)}{p F^{\frac{1}{p}+1}(v)}\\
=&\frac{(r^{n-1})'(v')^{p-1}+(p-1) r^{n-1} (v')^{p-2}v''}{F^{\frac{1}{p}}(v)}-(p-2)r^{n-1}\frac{(v')^{p-2}v''}{F^{\frac{1}{p}}}-\frac{r^{n-1}(v')^{p}f(v)}{p F^{\frac{1}{p}+1}(v)}\\
=&\frac{(r^{n-1})'(v')^{p-1}+r^{n-1}((v')^{p-1})'}{F^{\frac{1}{p}}(v)}-(p-2)r^{n-1}\frac{(v')^{p-2}v''}{F^{\frac{1}{p}}}-\frac{r^{n-1}(v')^{p}f(v)}{p F^{\frac{1}{p}+1}(v)}\\
=&\frac{(r^{n-1}(v')^{p-1})'}{F^{\frac{1}{p}}(v)}-(p-2)r^{n-1}\frac{(v')^{p-2}v''}{F^{\frac{1}{p}}}-\frac{r^{n-1}(v')^{p}f(v)}{p F^{\frac{1}{p}+1}(v)}\\
=&\frac{r^{n-1}(f(v)+g(v'))}{F^{\frac{1}{p}}(v)}-(p-2)r^{n-1}\frac{(v')^{p-2}v''}{F^{\frac{1}{p}}}-\frac{r^{n-1}(v')^{p}f(v)}{p F^{\frac{1}{p}+1}(v)}\\
=&\frac{r^{n-1}f(v)}{F^{\frac{1}{p}}(v)}\left(1+\frac{g(v')}{f(v)}-\frac{1}{p}\frac{(v')^{p}}{F(v)}\right)-(p-2)r^{n-1}\frac{(v')^{p-2}v''}{F^{\frac{1}{p}}}.
\end{align*}
Let $W(r):=1+\frac{g(v')}{f(v)}-\frac{1}{p}\frac{(v')^{p}}{F(v)}$, then,
\begin{equation*}
A'(r)(v')^{p-2}= \frac{r^{n-1}f(v)}{F^{\frac{1}{p}}(v)}W(r) -(p-2)r^{n-1}\frac{(v')^{p-2}v''}{F^{\frac{1}{p}}}.
\end{equation*}
Now we deal with the cases $p\leq 2$ and $p\geq 2$ separately.
\begin{itemize}
    \item [] Case 1 \;\;\; $p\leq 2$
\end{itemize}
When $p\leq 2$ it is clear that $(p-2)r^{n-1}\frac{(v')^{p-2}v''}{F^{\frac{1}{p}}}\leq 0$, thus
\begin{equation*}
A'(r)(v')^{p-2}\geq \frac{r^{n-1}f(v)}{F^{\frac{1}{p}}(v)}W(r).
\end{equation*}
 Set $\bar{A}(r)=\frac{A(r)}{r^{n-1}}$. We have
\begin{eqnarray*}
W(r)&=& 1+\frac{g(\frac{A(r)}{r^{n-1}}F^{\frac{1}{p}})}{f(v)}-\frac{1}{p}\frac{A^{p}(r)}{r^{(n-1)p}}\\
&=& 1+\frac{g(\bar{A}(r)F^{\frac{1}{p}})}{f(v)}-\frac{1}{p}\bar{A}^{p}\\
&=& 1+ \bar{A}^{p}(r) \left(\frac{1}{\bar{A}^{p}(r)}\frac{g(\bar{A}(r)F^{\frac{1}{p}})}{f(v)}-\frac{1}{p}\right).
\end{eqnarray*}
Note that $\bar{A}(t_{n})\rightarrow \tilde{A}:=\hat{A} R^{1-N}$, then for each $\epsilon>0$ there exists $n_{0}\in \N$ such that for all $n\geq n_{0}$ we have $\bar{A}(t_{n})\in (\tilde{A}-\epsilon, \tilde{A}+\epsilon)$. Then by the monotonicity of $g$
\begin{equation}
W(t_{n})\geq 1+\left(\bar{A}(t_{n})\right)^{p}\left(\rho(\epsilon)^{p}\frac{g\left((\tilde{A}(r)-\epsilon)(F(v))^{\frac{1}{p}}\right)}{\left(\tilde{A}-\epsilon\right)^{p} f(v)}-\frac{1}{p}\right),
\end{equation}
with $\rho(\epsilon):=\frac{\tilde{A}-\epsilon}{\tilde{A}+\epsilon}\rightarrow 1$ as $\epsilon\rightarrow 0$.
Fix $\epsilon>0$ sufficiently small and $\hat{A}$ large that \EQ{ass1} be satisfied; then  $W(t_{n})>0$ for $n$ large. Consequently $A'(t_{n})>0$, a contradiction.
\begin{itemize}
    \item [] Case 2\;\;\;  $p> 2$
\end{itemize}
If $p\geq 2$ then $(p-2)r^{n-1}\frac{(v')^{p-2}v''}{F^{\frac{1}{p}}}\geq 0$. Thus,
\begin{equation*}
A'(r)(v')^{p-2}\leq \frac{r^{n-1}f(v)}{F^{\frac{1}{p}}(v)}W(r).
\end{equation*}
Then, analogously to the other case, since $\bar{A}(s_{n})\rightarrow \tilde{A}:=\hat{A} R^{1-N}$ for each $\epsilon>0$ there exists $n_{0}\in \N$ such that for all $n\geq n_{0}$ we have $\bar{A}(s_{n})\in (\tilde{A}-\epsilon, \tilde{A}+\epsilon)$. Using the monotonicity of $g$
\begin{equation*}
W(s_{n})\leq 1+\left(\bar{A}(s_{n})\right)^{p}\left(\rho(\epsilon)^{p}\frac{g\left((\tilde{A}(r)+\epsilon)(F(v))^{\frac{1}{p}}\right)}{\left(\tilde{A}+\epsilon\right)^{p} f(v)}-\frac{1}{p}\right).
\end{equation*}
Fixing $\epsilon>0$ sufficiently small and $\hat{A}$ large that \EQ{ass2} be satisfied  $W(s_{n})<0$ . This is a contradiction with the established sign for $A'(s_{n})$.
This means that the third case for $A$ is impossible too. Therefore the radial classical solution is defined in whole $\rn$.
\end{proof}
\begin{rmk}
Note that the limit conditions in \EQ{ass1} and \EQ{ass2} are different according to the value of $p$. Thus, for example, from this theorem we can not conclude about the existence for the standard functions $f(t)=g(t)=t^{q}$ where $q\leq p-1$ if $p<2$, as we mentioned earlier.

However, as we observed in \REM{altcond}, the conditions \EQ{ass1} and \EQ{ass2} can be removed in the particular case when $g$ has  growth $s^{p-1}$ at most, i.e
\be\label{eq:growc}
\limsup_{s\rightarrow \infty} \frac{g(s)}{s^{p-1}}<+\infty .
\ee
In fact, by \EQ{pmdmud} we have
\bet
w'w^{\frac{1}{p-1}}- g(w^{\frac{1}{p-1}})w^{\frac{1}{p-1}}\leq (H^{p})'.
\eet
Now using the growth condition \EQ{growc} we obtain that
\bet
w'w^{\frac{1}{p-1}}- c w^{\frac{p}{p-1}}\leq (H^{p})'.
\eet
Letting $\widetilde{w}= w^{\frac{p}{p-1}}$,\; $\widetilde{H} = H^{p}$ and $c_{1}= c\frac{p}{p-1}$ the above inequality can be rewritten as
\bet
\widetilde{w}'-c c_{1}\widetilde{w}\leq c_{1}\widetilde{H}'.
\eet
Consequently,
\begeqnat
\widetilde{w}&\leq& e^{cc_{1}}\int_{r_{0}}^{r}e^{-cc_{1}s}c_{1}\widetilde{H}'(s)\diff s\\
&\leq &c_{1}R e^{cc_{1}}\widetilde{H}(s)
\eeqnat
for some $0<r_{0}<r$, which implies that $\ds{\frac{\widetilde{w}}{\widetilde{H}}\leq C(R)}$, that is $\ds{\frac{w^{\frac{1}{p-1}}}{H}\leq C(p,R)}$. Thus we only have the first case in the above proof, in which no hypothesis other than the (KO) integral conditions is used.
\end{rmk}

To focus on the important case of the power nonlinearity $g(t)=t^{q}$, $q>0$, we have the following result.
\begin{cor}
Let $f$ that satisfy \EQ{fgcond}. If $q>p-1$ the only non-negative weak solution in $W_{\loc}^{1,\infty}(\rn)$ of
\begin{equation}
\label{eq:partpm}
\plap u= f(u)+\left|\nabla u\right|^{q} \;\;\; \mbox{in} \;\; \rn
\end{equation}
is the trivial one.
\end{cor}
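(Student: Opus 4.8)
The plan is to recognize this corollary as the specialization of \THM{first} (i) to the power nonlinearity $g(s)=s^{q}$, so that the whole argument reduces to deciding which of the two integral conditions in \EQ{condintext} is satisfied. First I would check that $g(s)=s^{q}$ obeys the standing hypotheses \EQ{fgcond}: since $q>p-1>0$, the function is continuous on $[0,\infty)$, strictly increasing, and vanishes at the origin. The only delicate point in \EQ{fgcond} is the local Lipschitz requirement, which fails for $s^{q}$ at the origin when $q<1$; but this never interferes, because in the proof of \THM{first} (i) the comparison principle (Theorem~\ref{compprinc2}) is applied only on sets where $\essinf\{|Du|+|Dv|\}>0$, i.e. away from the origin, where $|\xi|^{q}$ is smooth in $\xi$ and hence locally Lipschitz.

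The heart of the matter is then a single integrability computation. With $g(s)=s^{q}$ the second alternative in \EQ{condintext} becomes
\[
\int_{1}^{\infty}\frac{s^{p-2}}{g(s)}\diff s=\int_{1}^{\infty} s^{\,p-2-q}\diff s,
\]
and this integral converges exactly when $p-2-q<-1$, that is, when $q>p-1$. Thus the hypothesis $q>p-1$ is precisely the threshold making the Keller--Osserman integral associated with $g$ finite, so the second clause of \EQ{condintext} holds with no further assumption on $f$ beyond \EQ{fgcond}.

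Having secured \EQ{condintext}, I would invoke \THM{first} (i) verbatim: every non-negative weak subsolution of \EQ{partpm} in $W^{1,\infty}_{\loc}(\rn)$ is identically zero. Since by \DEF{defsol} a weak solution is in particular a weak subsolution, the only non-negative weak solution in $W^{1,\infty}_{\loc}(\rn)$ is $u\equiv 0$, which is the assertion. I do not anticipate any genuine obstacle here; the corollary is a direct consequence of the theorem, and the only step requiring attention is the sharp exponent count above, which isolates $q>p-1$ as the critical value, with everything else inherited from \THM{first}.
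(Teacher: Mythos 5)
Your proposal is correct and is exactly the intended argument: the paper states this corollary as an immediate consequence of \THM{first}~(i), since for $g(s)=s^{q}$ the integral $\int_{1}^{\infty}s^{p-2-q}\diff s$ converges precisely when $q>p-1$, so the second alternative in \EQ{condintext} holds. Your additional remark on the failure of the $C^{0,1}$ hypothesis for $s^{q}$ at the origin when $q<1$ (possible only for $p<2$) is a point the paper passes over silently, and your observation that the comparison principle is only invoked where the gradients are bounded away from zero is the right way to dispose of it.
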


\section{Existence and nonexistence of entire solutions for ($P_{-}$)}\label{sec:entsoln}
In this section we study the solutions, in the smooth sense,  of $(P_{-})$
\be\label{eq:Pmenos}
\plap u = f(u)-g(\left|\nabla u\right|) .
\ee
To prove the main result, \THM{second} (i), first we prove the following lemma.
\begin{lem}\label{lem:pmaedot}
There exists an increasing strict super-solution $\bar{v}$ of
\begin{equation}\label{eq:pmaedot}
\begin{cases}
((v')^{p-1})'+\frac{n-1}{r}(v')^{p-1}&\leq f(v)-g(v')\\
\;\;\;\;\;\;\;\;\;\;\;\;\; \;\;\;\;\;\;\;\;\;\;\;\;\;\;\;\;\;\; v(0)&=\bar{v}_{0}\\
\;\;\;\;\;\;\;\;\;\;\;\;\; \;\;\;\;\;\;\;\;\;\;\;\;\;\;\;\;\;\; v'(0)&=0.
\end{cases}
\end{equation}
which ceases to exist at a finite $R$, and satisfies $\bar{v}(r)\rightarrow \infty$ when $r\rightarrow R$, for $\bar{v}_{0}$ large enough.
\end{lem}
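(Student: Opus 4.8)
The plan is to build $\bar v$ as the trajectory of a first-order autonomous relation tailored to $\Gamma$, and then to show that this relation forces the second-order differential inequality in \EQ{pmaedot}. Throughout I work under the standing hypothesis \EQ{pmecond1} of \THM{second}~(i), which is what makes the blow-up possible. Motivated by the shape of $\Gamma$ in \EQ{eqgama}, I would \emph{define} $\bar v$ on $[0,R)$ implicitly by
\bet
r=\int_{\bar v_0}^{\bar v(r)}\frac{\diff\sigma}{\Gamma^{-1}\!\left(F(\sigma)-F(\bar v_0)\right)},
\eet
equivalently as the increasing solution of $\Gamma(\bar v')=F(\bar v)-F(\bar v_0)$ with $\bar v(0)=\bar v_0$. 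Because $\Gamma(0)=0$ and $\Gamma$ is strictly increasing this gives $\bar v'(0)=0$ for free, and $\bar v$ is strictly increasing with $\bar v'>0$ for $r>0$; since $g(0)=0$ the integrand is integrable at the lower endpoint (an Osgood/Keller--Osserman computation), so a genuine nonconstant trajectory does leave $\bar v_0$ and $\bar v\in C^2(0,R)$, much as in \LEM{solmax}.

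For the finite blow-up I would take $R=\int_{\bar v_0}^{\infty}\diff\sigma/\Gamma^{-1}(F(\sigma)-F(\bar v_0))$; since $F(\sigma)-F(\bar v_0)\ge\tfrac12 F(\sigma)$ for large $\sigma$, the integrand is dominated by $1/\Gamma^{-1}(\tfrac12 F(\sigma))$ and \EQ{pmecond1} gives $R<\infty$, with $\bar v(r)\to\infty$ and hence $\bar v'\to\infty$ as $r\to R$. Differentiating the defining relation yields $\Gamma'(\bar v')\bar v''=f(\bar v)\bar v'$, so that
\bet
\left((\bar v')^{p-1}\right)'=(p-1)(\bar v')^{p-2}\bar v''=\frac{(p-1)(\bar v')^{p-1}f(\bar v)}{\Gamma'(\bar v')},
\eet
and proving that $\bar v$ is a supersolution of \EQ{pmaedot} reduces to the pointwise estimate
\bet
\frac{(p-1)(\bar v')^{p-1}}{\Gamma'(\bar v')}\,f(\bar v)+\frac{n-1}{r}(\bar v')^{p-1}+g(\bar v')\le f(\bar v),
\eet
with $\Gamma'(s)=2g(2s)+(p-1)c\,s^{p-1}$.

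I would estimate the three terms separately. The first is at most $f(\bar v)/c$ because $\Gamma'(s)\ge(p-1)c\,s^{p-1}$. For the second, since $\bar v'$ is increasing the trajectory gives $r\ge(\bar v-\bar v_0)/\bar v'$, i.e.\ $1/r\le \bar v'/(\bar v-\bar v_0)$; combining this with $(\bar v')^p\le\tfrac{p}{(p-1)c}\Gamma(\bar v')=\tfrac{p}{(p-1)c}(F(\bar v)-F(\bar v_0))$ and the mean value theorem bounds it by $\tfrac{(n-1)p}{(p-1)c}f(\bar v)$. Thus the first two terms are as small a multiple of $f(\bar v)$ as desired once $c$ is fixed large, uniformly in $p>1$ — which is why, unlike for $(P_+)$, no $p\lessgtr2$ dichotomy (\EQ{ass1}--\EQ{ass2}) surfaces here.

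The remaining term $g(\bar v')$ is the crux. The natural bound from monotonicity is $\bar v'\,g(\bar v')\le\int_0^{2\bar v'}g\le\Gamma(\bar v')=F(\bar v)-F(\bar v_0)$, and I would then use \EQ{pmecond1}: the convergence of $\int^\infty\diff s/\Gamma^{-1}(F(s))$ forces $F$ to outgrow $\Gamma$ strongly enough that $g(\bar v')=o(f(\bar v))$ as $\bar v\to\infty$, so $g(\bar v')\le\tfrac12 f(\bar v)$ beyond a threshold. Taking $\bar v_0$ past that threshold (and $c$ large) makes the three terms sum to strictly less than $f(\bar v)$, giving the strict supersolution property, with strictness inherited from the slack in the estimates. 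I expect the main obstacle to be exactly this last step: turning \EQ{pmecond1} into the uniform comparison $g(\bar v')\ll f(\bar v)$ along the entire trajectory for \emph{general} $f,g$ (not merely powers), and it is this quantification that forces the hypothesis "$\bar v_0$ large enough".
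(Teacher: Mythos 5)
Your construction is correct and rests on the same first integral as the paper's: both define the profile through the relation $\Gamma(\mbox{slope})=F(\mbox{value})$ (the paper's $\phi$ satisfies $\Gamma(|\phi'|)=F(\phi)$ via $\int_{\phi(t)}^{\infty}\diff s/\Gamma^{-1}(F(s))=t$), both kill the second-derivative term using $\Gamma'(s)\ge (p-1)c\,s^{p-1}$, and both control $g$ through $s\,g(s)\le \int_0^{2s}g\le \Gamma(s)=F(\cdot)$ combined with $F(v)\le v f(v)$ and the consequence $\Gamma^{-1}(F(s))/s\to\infty$ of \EQ{pmecond1} (the paper's \EQ{eqint}), which is exactly what forces ``$\bar v_0$ large enough.'' The genuine difference is the treatment of the radial weight: the paper composes $\phi$ with $t=R^{\frac{p}{p-1}}-r^{\frac{p}{p-1}}$, which produces $\bar v'(0)=0$ automatically and turns $\frac{n-1}{r}(\bar v')^{p-1}$ into a constant multiple of $|\phi'|^{p-1}$ (at the price of restricting $R$), whereas you work directly in $r$, obtain $\bar v'(0)=0$ from the shift $F(\bar v)-F(\bar v_0)$, and bound $\frac{n-1}{r}$ by $\bar v'/(\bar v-\bar v_0)$ using the monotonicity of $\bar v'$. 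Your version is arguably cleaner and avoids the artificial constraint on $R$, but it incurs two small obligations the paper's change of variables sidesteps: (a) you must select the nonconstant branch of the degenerate ODE $\bar v'=\Gamma^{-1}(F(\bar v)-F(\bar v_0))$ at $r=0$, which works because $\Gamma(s)\le C s^{\min(2,p)}$ near $0$ (using $g\in C^{0,1}$, $g(0)=0$) makes the Osgood integral converge at the lower endpoint, as you indicate; and (b) your replacement of $\Gamma^{-1}(F(\sigma)-F(\bar v_0))$ by a multiple of $\Gamma^{-1}(F(\sigma))$ needs the convexity of $\Gamma$ (so that $\Gamma^{-1}(\tau/2)\ge\tfrac12\Gamma^{-1}(\tau)$), which holds since $\Gamma'$ is increasing but should be said. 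The step you flag as the crux is handled exactly as you propose, via the standard fact that a convergent integral of the decreasing function $1/\Gamma^{-1}(F(s))$ forces $s/\Gamma^{-1}(F(s))\to0$; no further quantification is needed.
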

\begin{proof}[Proof]
Suppose that $R\leq (\frac{p-1}{p})^{p-1}$, we search for a super-solution in the form
\begin{equation*}
\bar{v}(r)=\phi (R^{\frac{p}{p-1}}-r^{\frac{p}{p-1}})
\end{equation*}
where $\phi$ is a decreasing function that is defined implicitly by
\begin{equation*}
\int_{\phi(t)}^{\infty} \frac{\diff s}{\Gamma^{-1}(F(s))}=t,
\end{equation*}
which is a super-solution of \EQ{pmaedot} if
\begin{equation}\label{eq:supersf}
(\frac{p}{p-1})^{p}(p-1)r^{\frac{p}{p-1}}\left|\phi '\right|^{p-2}\phi''+(\frac{p}{p-1})^{p-1}n\left|\phi '\right|^{p-1}\leq f(\phi)-g(\frac{p}{p-1}r^{\frac{1}{p-1}}\left|\phi '\right|)
\end{equation}
where the comma means the derivative with respect to $t=R^{\frac{p}{p-1}}-r^{\frac{p}{p-1}}$.
Using the conditions \EQ{fgcond} and by the hypothesis \EQ{pmecond1} $\phi$ is well defined and it can be verified that $\phi(t)\rightarrow \infty$ as $t\rightarrow 0$, $t>0$; $\phi'(t)<0$ and $\Gamma(\left|\phi'\right|)= F(\phi)$.
Then we can use the inequation
\begin{equation}\label{eq:supcond}
\left|\phi '\right|^{p-2}\phi''+(\frac{p}{p-1})^{p-1}n\left|\phi '\right|^{p-1}+g(\left|\phi '\right|)\leq f(\phi)
\end{equation}
instead of \EQ{supersf} to verify the character of super-solution of $\phi$. Observe that
\begin{equation}\label{eq:eqint}
\frac{\Gamma^{-1}(F(s))}{s}\rightarrow \infty \; \; \;  \mbox{as} \; \; \; s\rightarrow \infty,
\end{equation}
hence
\begin{equation*}
\frac{\left|\phi '(t)\right|}{\phi(t)}\rightarrow \infty \; \; \;  \mbox{as} \; \; \; t\rightarrow 0.
\end{equation*}
Thus, we can affirm that exists $\epsilon>0$ such that $\phi(t)\leq \frac{1}{2}\left|\phi '(t)\right|$ if $t\in (0,\epsilon)$. Now we take $R$ such that $t\in (0,\epsilon)$ for $r\in (0, R)$.

Finally we check that $\phi$ satisfies \EQ{supcond}, so is the desired function. By differentiating with respect to $t$ in \EQ{eqint} we obtain
\begin{equation}
\phi''=\frac{f(\phi)\left|\phi '\right|}{2 g(2\left|\phi '\right|)+(p-1)c(p,N)\left|\phi '\right|^{p-1}}.
\end{equation}
We can see that $\phi''>0$ and
\begin{equation}\label{eq:eqint2}
\left|\phi '\right|^{(p-2)}\phi''\leq\frac{f(\phi)}{(p-1)c(p,N)}.
\end{equation}
On the other hand we have
\begin{equation*}
F(t)=\int_{0}^{t} f(s)\diff s\leq f(t)t
\end{equation*}
and
\begin{equation*}
\Gamma(t)=\int_{0}^{2t} g(s)\diff s+\frac{p-1}{p}c(p,N)t^{p}\geq tg(t)+\frac{p-1}{p}c(p,N)t^{p}
\end{equation*}
thus, by taking $c(p,N)=(\frac{p}{p-1})^{p}n$ we obtain
\begin{equation}\label{eq:eqint3}
g(\left|\phi '\right|)+(\frac{p}{p-1})^{p-1}n\left|\phi '\right|^{p-1}\leq \frac{\Gamma(\left|\phi '\right|)}{\left|\phi '\right|}=\frac{F(\phi)}{\left|\phi '\right|}\leq f(\phi)\frac{\phi}{\left|\phi '\right|}\leq \frac{1}{2}f(\phi).
\end{equation}
By adding \EQ{eqint2} and \EQ{eqint3} we get the expression \EQ{supcond} and  conclude.
\end{proof}
Using this result we can prove the \THM{second}.
\begin{proof}[Proof of \THM{second} (i)]
 Suppose by contradiction that we have  a non trivial solution $w$ of $(P_{-})$, defined in the whole $\rn$. Let  $\tilde{v}$ be a solution of \EQ{pmaedot}  with $\tilde{v}_{0}=\frac{w(0)}{2}$ -- we know such a solution exists by Lemma \ref{lem:solmax}.

Now we show that $\tilde{v}$ is defined in the whole $\rn$.
By \LEM{prad} we know that  $\tilde{v}'>0$ and $((\tilde{v}')^{p-1})'\geq 0$. Then
\bet
\tilde{v}'\leq \frac{r}{n-1} f (\tilde{v}).
\eet
Thus if $\tilde{v}$ exists in some maximal interval $(0,\bar{R})$ with $\bar{R}<\infty$ and becomes infinite at  $\bar{R}$,  we have
\begin{equation*}
w(x)<\tilde{v}(\left|x\right|) \;\;\; \mbox{in}\;\; \partial B_{r_{0}}(0),
\end{equation*}
for $0<r_{0}<\bar{R}$ sufficiently near to $\bar{R}$. Using the comparison principle in $B_{r_{0}}(0)$ we obtain that
\begin{equation*}
w(x)<\tilde{v}(\left|x\right|) \;\;\; \mbox{in}\;\;   B_{r_{0}}(0).
\end{equation*}
This is a contradiction the initial hypothesis that $\tilde{v}(0)=\tilde{v}_{0}<w(0)$.

 Therefore $R=\infty$ and $\tilde{v}(r)\rightarrow \infty$ as $r\rightarrow \infty$, since $\tilde {v}$ is increasing and $((\tilde{v'})^{p-1})'\geq 0$. By using again the comparison principle we have
\begin{equation*}
w(x)\not\leq \tilde{v}(\left|x\right|) \;\;\; \mbox{in}\;\; \partial B_{r}(0),
\end{equation*}
for all  $r\in (0,\infty)$. This implies that exist a sequence $x_{n}\in \R^{n}$ with $\left|x_{n}\right|\rightarrow \infty$ such that $w(x_{n})\rightarrow\infty$. Fixing $n_{0}$ large enough we obtain $w(x_{n_{0}})>\bar{v}_{0}$, where $\bar{v}_{0}$ is the number obtained in Lemma \ref{lem:pmaedot}.

Repeating the above for $\bar{v}(\left|x-x_{n_{0}}\right|)$ instead of $\tilde{v}(\left|x\right|)$, with $\bar{v}$ the function of Lemma~\ref{lem:pmaedot}, we obtain a contradiction.
\end{proof}

\begin{rmk}
The integral condition \EQ{pmecond1} is sharp -- we can compute that
\begin{equation*}
\divg \left(\left|\nabla u\right|^{p-2}\nabla u\right)\geq(p-1)\left|u\right|^{p-2}u-\left|\nabla u\right|^{p}
\end{equation*}
has the non-negative nontrivial solution $u=\exp x_{1}$.
\end{rmk}
\begin{proof}[Proof of \THM{second} (ii)]
Let $v$ be a  solution of
\begin{equation}\label{eq:pmaedotig}
\begin{cases}
((v')^{p-1})'+\frac{n-1}{r}(v')^{p-1}&= f(v)-g(v')\\
\;\;\;\;\;\;\;\;\;\;\;\;\; \;\;\;\;\;\;\;\;\; \;\;\;\;\;\;\;\; v(0)&=v_{0}>0\\
\;\;\;\;\;\;\;\;\;\;\;\;\; \;\;\;\;\;\;\;\;\; \;\;\;\;\;\;\; v'(0)&=0
\end{cases}
\end{equation}
in a maximal interval $(0,R)$, \; $0<R<\infty$.
By the  results in \LEM{prad} we have that $f(v)-g(v')\geq 0$, hence
\begin{equation}\label{eq:imp1}
g^{-1}(f(v))\geq v'.
\end{equation}
Also by the properties of $v$ we know that if $v$ is defined in a maximal interval $(0,R)$ with $R<\infty$, $v(r)\rightarrow\infty$ as $r\rightarrow R$ and since $v$ satisfy \EQ{pmaedotig}
\begin{equation*}
((v')^{p-1})'\leq f(v)
\end{equation*}
from which we get
\begin{equation}\label{eq:imp2}
v'\leq (\frac{p}{p-1})^{\frac{1}{p}}(F)^{\frac{1}{p}}.
\end{equation}
By integrating \EQ{imp1} and \EQ{imp2} we obtain that
\begin{equation*}
\max\left\{\int_{v_{0}}^{v(r)}\frac{\diff s}{(F(s))^{\frac{1}{p}}}, \int_{v_{0}}^{v(r)}\frac{\diff s}{g^{-1}(f(s))}\right\}\leq (\frac{p}{p-1})^{\frac{1}{p}}r, \;\;\; r\in (0,R).
\end{equation*}
Letting $r\rightarrow R$ we get a contradiction with the assumption of the theorem. Then a solution of the problem is $v(x)=u(\left|x\right|)$ for all $x\in \rn$.
\end{proof}
\begin{rmk}
To obtain \EQ{imp1} we only use that the radial expression of the operator is nonnegative. Thus if we know that $f(v)-g(\vl)\geq 0$ regardless of the operator we obtain the integral condition $\ds{\int_{1}^{\iy}\frac{1}{g^{-1}(f(s))}\diff s}=\iy$ as a sufficient condition for the existence of solution.
\end{rmk}
Analyzing the particular case of the nonlinearity $g(t)=t^{q},\; q>0$ we have $g^{-1}(t)=t^{\frac{1}{q}}$ and $\Gamma (s)=\frac{(2s)^{q+1}}{q+1}+(\frac{p}{p-1})^{p-1}n s^{p}$. Then
\begin{itemize}
\item If $0<q\leq p-1$  \;\; $\Gamma(s)\sim \mbox{cte}\; s^{p}$ when $s\rightarrow \infty$. Thus $\Gamma^{-1}(s)\sim \mbox{cte}\; s^{\frac{1}{p}}$ when $s\rightarrow \infty$.
\item If $q> p-1$  \;\; $\Gamma(s)\sim \mbox{cte}\; s^{q+1}$ when $s\rightarrow \infty$. Thus $\Gamma^{-1}(s)\sim \mbox{cte}\; s^{\frac{1}{q+1}}$ when $s\rightarrow \infty$.
\end{itemize}
This implies the following corollary.
\begin{cor}
Let $f$ be a continuous function satisfy \EQ{fgcond}. Then
\begin{itemize}
\item [(i)] If $0<q\leq p-1$ the equation
\begin{equation}
\label{eq:cppmen}
\plap u =f(u)-\left|\nabla u\right|^{q}
\end{equation}
admits a positive solution if and only if the condition \EQ{kof} is satisfied.
\item [(ii)] If $q>p-1$ the equation \EQ{cppmen} has at least one positive solution if
\begin{equation*}
\int_{1}^{\infty}\frac{1}{f(s)^{\frac{1}{q}}}=\infty
\end{equation*}
 and any solution vanished identically if
\begin{equation*}
\int_{1}^{\infty}\frac{1}{F(s)^{\frac{1}{q+1}}}<\infty.
\end{equation*}
\end{itemize}
\end{cor}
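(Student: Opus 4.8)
The plan is to deduce this corollary directly from \THM{second} by specializing to $g(t)=t^{q}$ and reading off the asymptotics of $\Gamma^{-1}$ and $g^{-1}$ already recorded in the discussion preceding the statement. First I would isolate the two explicit ingredients: from $g(t)=t^{q}$ we have $g^{-1}(t)=t^{1/q}$, so that $g^{-1}(f(s))=f(s)^{1/q}$; and, by the definition \EQ{eqgama},
\[
\Gamma(s)=\frac{(2s)^{q+1}}{q+1}+\Bigl(\tfrac{p}{p-1}\Bigr)^{p-1}n\,s^{p}.
\]
Its dominant term as $s\to\infty$ is $s^{p}$ when $q\le p-1$ (including the borderline $q=p-1$, where both candidate terms scale like $s^{p}$) and $s^{q+1}$ when $q>p-1$. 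Consequently $\Gamma^{-1}(t)\sim c_{1}\,t^{1/p}$ in the first regime and $\Gamma^{-1}(t)\sim c_{2}\,t^{1/(q+1)}$ in the second, for suitable positive constants $c_{1},c_{2}$.

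Second, I would note that since $f$ is strictly increasing with $f(0)=0$ we have $f(s)\ge f(1)>0$ for $s\ge1$, whence $F(s)\ge f(1)(s-1)\to\infty$. Thus the asymptotics of $\Gamma^{-1}$ at infinity control $\Gamma^{-1}(F(s))$ for large $s$: there are constants $0<a\le b$ and an $s_{0}$ with $a\,F(s)^{1/p}\le\Gamma^{-1}(F(s))\le b\,F(s)^{1/p}$ for $s\ge s_{0}$ (and the analogous two-sided bound with exponent $1/(q+1)$ when $q>p-1$). Because convergence of the integrals in \EQ{pmecond1} and \EQ{kof} is determined solely by the integrand for large $s$, this shows that when $q\le p-1$ the condition \EQ{pmecond1} holds if and only if $\int_{1}^{\infty}F(s)^{-1/p}\diff s<\infty$, i.e.\ precisely when \EQ{kof} fails; and that when $q>p-1$ the condition \EQ{pmecond1} is equivalent to $\int_{1}^{\infty}F(s)^{-1/(q+1)}\diff s<\infty$.

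With these equivalences in hand the corollary is immediate. For item (i): if \EQ{kof} holds, the first alternative in the existence hypothesis \EQ{pminsecond} of \THM{second}(ii) is satisfied and we obtain a positive solution; if \EQ{kof} fails, the equivalence above yields \EQ{pmecond1}, and \THM{second}(i) forces every subsolution to vanish identically, giving the stated ``if and only if''. For item (ii): existence follows by invoking the \emph{second} alternative of \EQ{pminsecond}, since $g^{-1}(f(s))=f(s)^{1/q}$ turns the hypothesis $\int_{1}^{\infty}f(s)^{-1/q}\diff s=\infty$ into exactly that alternative; and the nonexistence claim follows from \THM{second}(i) together with the equivalence of \EQ{pmecond1} with $\int_{1}^{\infty}F(s)^{-1/(q+1)}\diff s<\infty$.

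The only genuine work, and the step I expect to require the most care, is the integral-comparison argument: converting the asymptotic relations $\Gamma^{-1}(t)\sim c\,t^{1/p}$ (respectively $c\,t^{1/(q+1)}$) into the assertion that the relevant Keller--Osserman-type integrals converge or diverge simultaneously. This is routine once $F(s)\to\infty$ is noted and the two-sided bounds are written down, but it should be stated cleanly so that the borderline exponent $q=p-1$ is covered and so that the consistency already observed after \THM{second} --- namely that \EQ{pmecond1} and \EQ{pminsecond} cannot hold at the same time --- is respected throughout.
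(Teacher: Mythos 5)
Your proposal is correct and follows essentially the same route as the paper: the paper derives the corollary from \THM{second} by specializing $g(t)=t^{q}$, recording $g^{-1}(t)=t^{1/q}$ and the asymptotics $\Gamma^{-1}(s)\sim c\,s^{1/p}$ for $q\le p-1$ and $\Gamma^{-1}(s)\sim c\,s^{1/(q+1)}$ for $q>p-1$, exactly as you do. Your write-up merely makes explicit the routine integral-comparison step (using $F(s)\to\infty$ and two-sided bounds) that the paper leaves implicit.
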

By taking $f(t)=t^{m},\; m>0$ in the above, the first affirmation  is satisfied if and only if $m<q$ with $q>p-1$. On another hand if $q\leq p-1$ there is a positive solution if and only if $m>q$.

%%%%%%%%%%%%%%%%%%%%%%%%%%%%%%%%%%%%%%%%%%%%%%%%%%%%%%%%%%%%%%%%%%%%%%%%%%%%%%%%%%%%%%%%
%%Acknowledgments
%\noindent\textbf{Acknowledgments:}

%%%%%%%%%%%%%%%%%%%%%%%%%%%%%%%%%%%%%%%%%%%%%%%%%%%%%%%%%%%%%%%%%%%%%%%%%%%%%%%%%%%%%%%%
%Bibliography
\bibliographystyle{siam}
\bibliography{references}

%%%%%%%%%%%%%%%%%%%%%%%%%%%%%%%%%%%%%%%%%%%%%%%%%%%%%%%%%%%%%%%%%%%%%%%%%%%%%%%%%%%%%%%%
\end{document}